\documentclass[10pt]{article}
\usepackage{amssymb,amsthm,amsmath}
\usepackage{hyperref}
\usepackage{graphicx}
\usepackage{url}
\usepackage{amsmath}

\newtheorem{theorem}{Theorem}
\newtheorem{definition}{Definition}
\newtheorem{proposition}{Proposition}

\newtheorem{remark}{Remark}
\newtheorem{example}{Example}
\newtheorem{corollary}{Corollary}



\renewenvironment{matrix}[1][r]{%
  \hskip -\arraycolsep\array{*\c@MaxMatrixCols {#1}}%
}{%
  \endarray \hskip -\arraycolsep
}

\renewenvironment{bmatrix}[1][r]{%
  \left[\begin{matrix}[#1]%
}{%
  \end{matrix}\right]
}

\mathchardef\ordinarycolon\mathcode`\:
\mathcode`\:=\string"8000
\begingroup \catcode`\:=\active
  \gdef:{\mathrel{\mathop\ordinarycolon}}
\endgroup

\parskip1ex
\parindent0pt



\newcommand{\F}{\mathbb{F}}
\newcommand{\FF}{\mathcal{F}}





\begin{document}
\title{Construction of Arithmetic Secret Sharing Schemes by Using  Torsion Limits}

\author{ {\normalsize  Seher Tutdere \footnote{Department of Mathematics, Gebze Technical University, Turkey (stutdere@gmail.com) This paper was presented at the conference Arithmetic, Geometry, Cryptography and Coding Theory (AGCT-15.)},
and Osmanbey Uzunkol \footnote{Mathematical and Computational Sciences Labs, T\"UB\.ITAK B\.ILGEM,  Turkey (osmanbey.uzunkol@tubitak.gov.tr)} 
}}
\date{}
\maketitle
\begin{abstract}

Recent results of Cascudo, Cramer, and Xing on the construction of arithmetic secret sharing schemes are improved by using some new bounds on the torsion limits of algebraic function fields. Furthermore, new bounds on the torsion limits of certain towers of function fields are given.\\   

\textbf{Keywords:} Algebraic function fields, torsion limits,  Riemann-Roch  systems of equations, arithmetic secret sharing schemes.

\end{abstract}

\section{Introduction}

Secret sharing is a cryptographic mechanism allowing to distribute shares among different parties. This is achieved by a trusted dealer in such a way that only authorized subset of parties can determine the secret \cite{beimel2}. Unlike conventional cryptographic sc{\large }hemes, secret sharing schemes enable the user to eliminate the root of trust problem \cite{beimel2,shamir}. Furthermore, secret sharing has plenty of privacy preserving real-life applications ranging from access controls \cite{naor}, oblivous transfers \cite{tassa} to biometric authentication schemes \cite{ign:will}.

If the authorized subset has the cardinality larger than a predetermined lower bound, then  secret sharing schemes have the property of {\it threshold access structure} \cite{farras}. Moreover, a secret sharing scheme is called {\it ideal} if the shares have the same size as secrets \cite{beimel2}. Shamir's secret sharing scheme is a classical example of an ideal secret sharing scheme having threshold access structure. Since the shares are computed and reconstructed by using only linear algebra \cite{karnin:1983}, it is also an example of {\it linear secret sharing schemes} (LSSS).  Ito et al. \cite{ito} introduced secret sharing schemes for general access structures. Moreover, an LSSS can be constructed for any access structure \cite{ito2}. However, the shares grow exponentially in the number of parties, and the optimization of secret sharing schemes for arbitrary access structures is a difficult problem \cite{beimel2}.

Chen and Cramer \cite{chen:cram} introduced an LSSS defined over a finite field using algebraic-geometry codes (AG-codes). Unlike the general case, this scheme has the advantage that shares are much smaller than the number of parties since one uses algebraic curves with many rational points. Therefore, this achieves larger information rate by generalizing Shamir's secret sharing scheme into an algebra-geometric setting. One inevitable disadvantage (due to the bounds on MDC \cite{chen:cram}) is that this scheme is an ideal {\it ramp secret sharing scheme, i.e. a quasi-threshold scheme}. In particular, one has the property that the scheme has $t$-rejecting and $t+1+2g$-accepting structure, where $g$ is the genus of the underlying maximal algebraic curve.

Cascudo, Cramer, and Xing \cite{cas:cram:xing} introduced \textit{arithmetic secret sharing schemes} which are special quasi-threshold $\mathbb{F}_q$-linear secret sharing schemes based on AG-codes. They can be used as the main algorithmic primitives in realizing information theoretically secure multi-party computation schemes (in particular, communication-efficient two-party cryptography) and verifiable secret sharing schemes \cite{chaum:crepeau:damgaard,cramer:damgaard:maurer}. 
More precisely, it is shown in \cite{chen:cram} that \textit{asymptotically good} arithmetic secret sharing schemes can be used to achieve constant-rate communication in secure two-party communication by removing logarithmic terms which appears if one instead uses Shamir's secret sharing scheme \cite{shamir}. As argued in \cite{cas:cram:xing}, these schemes can be also used as an important primitive in plenty of other useful applications in cryptography including zero-knowledge for circuit satisfiability \cite{ishai:2007} and efficient oblivous transfers \cite{ishai:2008}.

Constructing asymptotically good arithmetic secret sharing schemes is based on some special families of algebraic function fields. Besides the well-known notion of \textit{Ihara limits} for constructing asymptotically good function field towers, the notion \textit{torsion limits}  for algebraic function fields is introduced in \cite{cas:cram:xing}.  Geometrically, in order to construct arithmetic secret sharing schemes with asymptotically good properties, we need not only to have algebraic curves with many rational points but also to have jacobians (of corresponding algebraic curves) having comparably small $d$-torsion subgroups. On the algebraic side, the torsion limit for a function field tower with a given Ihara limit gives information on the size of $d-$torsion subgroups of the corresponding degree-zero divisor class groups. In \cite{cas:cram:xing}, the authors  give  asymptotical results improving the classical bounds of Weil \cite{weil} on the size of torsion subgroups of abelian varieties over finite fields. For this purpose, the existence of solutions for certain Riemann-Roch systems of equations is investigated. The authors further give new bounds on the  torsion limits of certain families of function fields. Consequently, they use these bounds in constructing asymptotically good arithmetic secret-sharing schemes by weakening the lower bound condition on the Ihara constant.

 In this work, we made some modifications and improvements on their results by using the bound on class number given by \cite{la:mar}. Moreover, we estimated the torsion limit of an important class of towers of function fields introduced by Bassa
et al. depending on the Ihara limit given in \cite{bas:be:gar:stic}. For example for the case $d>2$, these new bounds can easily be adapted to improve the communication complexity of zero knowledge protocols  for multiplicative relations introduced in \cite{cramer:damgaard:pastro}.

In Section \ref{prem} we revisit the preliminaries about algebraic function fields together with algebraic-geometry codes and Riemann-Roch systems of equations. We further investigate the bounds on the torsion limits in Section \ref{prem}. Then, we apply the result for the bounds on the torsion limits for  function field towers in Section \ref{torsion}. In Section \ref{secret} new conditions for the construction of arithmetic schemes are investifated and the results are proven. We construct families of arithmetic secret sharing schemes with uniformity in Section \ref{uniformity}. Moreover, we give examples yielding to infinite families of arithmetic secret sharing schemes in Section \ref{uniformity}. Finally, Section \ref{conclusion} concludes the paper.

\section{Preliminaries}\label{prem}

Let $F/\mathbb{F}_q$ be a function field over  the finite field $\mathbb{F}_q$  with $q$ elements, where $q$ is a power of a prime number $p$.  We denote by $g:=g(F)$ its genus, by $B_i(F)$ 
its number of places of degree $i$ for any $i\in \mathbb{N}$, and by $\mathbb{P}(F)$ its set of rational places. 

An asymptotically exact sequence of algebraic  function fields $\mathcal{F}={F_i}_{i\geq 0}$ over a finite field $\F_q$ is a sequence of function fields  such that for all $m\geq 1$ the following limit exists:
\[\beta_m(\FF)=\lim_{i\to \infty} \frac{B_m(F_i)}{g_i}.\]
It is well-known that any tower of function fields over any  finite  field is an exact sequence, see for instance \cite{hess:stic:tut}. 
 
We will use the following notations frequently:
\begin{itemize}
	\item $A_n$: The number of effective divisors of degree $n$, for $n\geq 1$. 
	\item $h_i$: The class number of $F_i/\F_q$ for any family of function fields $\FF=(F_i)_{i\geq 1}$.
	\item $\mathbb{P}^{(k)}(F)$: The set of places of $F/\F_q$ having degree $k\in \mathbb{N}$.
	\item  $\log:=\ln $.
    \item 	$CI(F):=\textrm{Div}(F)/\textrm{Prin}(F)$: The divisor class  group of $F/\F_q$.
     \item 	$CI_s(F):=\{[D]\; : \deg D=s \}$, where $[D] \in CI(F)$ stands for the divisor class containing $D$.
	\item  Div$^0(F)$: The group of divisors of $F$  with degree zero,
	\item  $\mathcal{J}_F=\textrm{Div}^0(F)/\textrm{Prin}(F)$:  The zero divisor class group of $F$ with cardinality $|\mathcal{J}_F|=h(F)$, which is called the \textit{class number}.
	\item $C(D,G)_{L}:$ The image of the map $\phi: \mathcal{L}(G)\rightarrow \mathbb{F}_q^k\times\mathbb{F}_q^n$,\\ $f\mapsto (f(Q_1), \cdots f(Q_k),f(P_1),\cdots f(P_n))$, where $\mathcal{L}(G)$ is the Riemann-Roch space of $G$, $k,n\in \mathbb{N},\  n\geq k$, $G$ is a divisor of $F$, $Q_1,\cdots,Q_k, P_1,\cdots P_n\in \mathbb{P}^{(1)}(F)$ are pairwise distinct $\mathbb{F}_q$-places with $D=\sum_{j=1}^{k}Q_j+\sum_{i=1}^{n}P_i$ and $\mbox{supp }D\cap \mbox{supp }G=\emptyset$.
\end{itemize}
For a positive integer $r$, let 
\[\mathcal{J}_F[r]:=\{[D] \in \mathcal{J}_F: r\cdot [D]=\mathcal{O}\}\]
 be the $r$-torsion  subgroup of $\mathcal{J}_F$, where $\mathcal{O}$ denotes the identity element of $\mathcal{J}_F$.
For each family  $\mathcal{F}=\{F/\mathbb{F}_q\}$ of function fields with $g(F)\to \infty$, the limit
\[J_r(\mathcal{F}):=\liminf_{F\in \mathcal{F}}\frac{\log_q |\mathcal{J}_F[r]|}{g(F)}\]
is called the \textit{$r$-torsion limit} of the family $\mathcal{F}$. Let $a\in \mathbb{R}$ and $\mathfrak{F}$ be the set of families $\{\mathcal{F}\}$ of 
function fields over $\mathbb{F}_q$ such that in each family genus tends to infinity and the Ihara limit 
\begin{equation*}
A(\mathcal{F})=\lim_{g(F)\to \infty} \frac{B_1(F)}{g(F)}\geq a \textrm{ for every }  \mathcal{F} \in \mathfrak{F}.
\end{equation*}
Then the asymptotic quantity $J_r(q,a)$ is defined by 
\[J_r(q,a):=\liminf_{\mathcal{F} \in \mathfrak{F}} J_r(\mathcal{F}).\]
We note that  we only consider the Ihara limit for function field families $\mathcal{F}$ for which this limit exists following the lines of \cite[Remark 2.1]{cas:cram:xing}. 

   An $(n,t,d,r)$-arithmetic secret sharing scheme for $\mathbb{F}_q^k$ over $\mathbb{F}_q$ is an $n$-code $C$ for $\mathbb{F}_q^k$ such that
	$t\geq 1$, $d\geq 2$, $C$ is $t$-disconnected, the $d$ powering $C^{*d}$ is an $n$-code for $\mathbb{F}_q^k$, and  $C^{*d}$   is $r$-reconstructing. For further details, the relation of these codes with $C(D,G)_{L}$, and the concept of uniformity we refer to \cite[pp. 3873-3875]{cas:cram:xing}.

Firstly, we  investigate  the bounds on torsion limits in the following theorem by combining the bounds in Theorems 2.3 and 2.4 of \cite{cas:cram:xing}:
\begin{theorem}\label{thm_bounds}  Let $\mathbb{F}_q$ be a finite field of characteristic $p$.  For any integer $r\geq 2$,  set $J_r:=J_r(q,A(q))$. Write $r$ as $r=p^lr'$ for some $l\geq 0$ and a positive integer $r'$ coprime to $p$. 
	Let $c:=\gcd(r',q-1)$ and $k:=\frac{l\sqrt{q}}{\sqrt{q}+1}$.
	\begin{itemize}
		\item[(i)] If $r\mid q$ and $q$ is a square, then $J_r\leq \frac{1}{\sqrt{q}+1} \log_q r$.
		\item[(ii)] If $r\mid(q-1)$, then $J_r\leq 2\log_q r$.
		\item[(iii)] If $r\nmid (q-1)$ and,  $q$ is non-square or $c>p^k$, then $J_r\leq \log_q r$.
		\item[(iv)] If $r\nmid q$, $r\nmid(q-1)$, $q$ is a square, and $c\leq p^k$,  then 
		\[ J_r\leq \frac{l}{\sqrt{q}+1} \log_q p+\log_q (cr').\]
	\end{itemize}	
\end{theorem}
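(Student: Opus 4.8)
The plan is to reduce everything to two ingredients already available in the literature: first, the explicit torsion-limit bounds from Theorems~2.3 and~2.4 of \cite{cas:cram:xing}, which give upper bounds on $J_r(q,A(q))$ in terms of the structure of the $r$-torsion of class groups of function fields attaining the Ihara bound; and second, a decomposition of the $r$-torsion subgroup according to the factorization $r = p^l r'$ with $\gcd(r',p)=1$. Since $\mathcal{J}_F[r] = \mathcal{J}_F[p^l] \oplus \mathcal{J}_F[r']$, we have $\log_q|\mathcal{J}_F[r]| = \log_q|\mathcal{J}_F[p^l]| + \log_q|\mathcal{J}_F[r']|$, so it suffices to bound the prime-to-$p$ part and the $p$-part separately and add the resulting asymptotic contributions. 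This additivity is what makes the four cases clean: each case is governed by which of the two parts is nontrivial and by the arithmetic of how $r'$ interacts with $q-1$ (equivalently, how the $r'$-torsion points are defined over $\mathbb{F}_q$).

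Next I would handle the prime-to-$p$ part. The key classical fact is that for the $\ell$-torsion with $\ell$ coprime to $p$, the group $\mathcal{J}_F[\ell]$ is a subquotient of $(\mathbb{Z}/\ell)^{2g}$, and the number of $\mathbb{F}_q$-rational $\ell$-torsion points is controlled by $\gcd(\ell, q-1)$: if $\ell \mid q-1$ then one expects the full $(\mathbb{Z}/\ell)^{2g}$ only in the worst case, giving the crude bound $\log_q|\mathcal{J}_F[\ell]| \le 2g\log_q \ell$, hence case~(ii) with $J_r \le 2\log_q r$. When $\ell \nmid q-1$, not all of the torsion can be rational, and the refined counting (this is exactly the content of the relevant theorem in \cite{cas:cram:xing}) improves the factor $2$ to $1$, giving $J_r \le \log_q r$ when additionally the genus-defect condition $c > p^k$ holds or $q$ is non-square — this is case~(iii). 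For case~(i), when $r \mid q$ (so $r' = 1$, $r = p^l$), the bound $J_r \le \frac{1}{\sqrt q + 1}\log_q r$ comes directly from Theorem~2.4 of \cite{cas:cram:xing}, which bounds the $p$-rank of the class group of a function field over a square finite field attaining the Drinfeld--Vl\u{a}du\c{t} bound via the Deuring--Shafarevich-type estimate; the factor $\frac{1}{\sqrt q+1}$ is the reciprocal of the Ihara limit $A(q) = \sqrt q - 1$ rescaled appropriately.

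Finally, case~(iv) is the genuinely combined case: $r = p^l r'$ with $l \ge 1$, $r' > 1$, $q$ a square, and $c \le p^k$ so that the sharper prime-to-$p$ bound is not available. Here I would write $\log_q|\mathcal{J}_F[r]| = \log_q|\mathcal{J}_F[p^l]| + \log_q|\mathcal{J}_F[r']|$, bound the first summand by $\frac{g}{\sqrt q + 1}\log_q p^l = \frac{gl}{\sqrt q+1}\log_q p$ using the $p$-part estimate of case~(i), and bound the second summand by $g\log_q(cr')$ using the appropriate prime-to-$p$ estimate from \cite{cas:cram:xing} (which in the regime $c \le p^k$ yields the factor $\log_q(cr')$ rather than $\log_q r'$ or $2\log_q r'$). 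Dividing by $g$ and taking $\liminf$ over families attaining $A(q)$ gives
\[
J_r \le \frac{l}{\sqrt q + 1}\log_q p + \log_q(cr'),
\]
as claimed. The main obstacle, and the only place requiring care, is verifying that the two bounds from \cite{cas:cram:xing} can legitimately be applied \emph{simultaneously} to the same family of function fields — i.e., that there is a single family attaining the Ihara limit for which both the $p$-rank estimate and the prime-to-$p$ torsion estimate hold with the stated constants; this is why the hypotheses ($q$ square, the precise threshold $c \lessgtr p^k$ with $k = \frac{l\sqrt q}{\sqrt q + 1}$) are stated exactly as they are, matching the hypotheses under which the two source theorems are valid, so that the case analysis is exhaustive and each case lands in the hypothesis range of the bound it invokes.
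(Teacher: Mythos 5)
Your proposal follows essentially the same source-citation strategy as the paper, but you insert a structural detour that the paper avoids and you misdiagnose where the threshold $c\gtrless p^k$ comes from. The paper's proof is a direct comparison of bounds: Theorem~2.3(ii) of \cite{cas:cram:xing} gives $J_r\le \log_q r$ whenever $r\nmid(q-1)$, \emph{unconditionally in $c$}, while Theorem~2.4(ii) gives $J_r\le \frac{l}{\sqrt q+1}\log_q p+\log_q(cr')$ when $q$ is a square. Both bounds are simultaneously valid; the case split between (iii) and (iv) is not about which hypotheses hold but about which of the two upper bounds is smaller. The paper makes this explicit by computing
\[
\frac{l}{\sqrt q+1}\log_q p+\log_q(cr')-\log_q r \;=\; \log_q c - \frac{l\sqrt q}{\sqrt q+1}\log_q p,
\]
so the first bound is tighter exactly when $c\ge p^k$ with $k=\frac{l\sqrt q}{\sqrt q+1}$. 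This one-line computation is the real content of the case split, and it is missing from your argument. Instead you present $c>p^k$ as an auxiliary condition under which the $\log_q r$ bound becomes \emph{available}, which is not how the logic runs.

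The larger issue is your CRT decomposition $\mathcal{J}_F[r]\cong\mathcal{J}_F[p^l]\oplus\mathcal{J}_F[r']$ followed by separately bounding the two summands and adding. You correctly flag the danger — that the $p$-part bound and the prime-to-$p$ bound must be realized by the \emph{same} family of function fields, since the torsion limit is a $\liminf$ and you cannot in general add bounds achieved by different families — but you do not resolve it; you merely assert that the hypotheses are arranged so that it works out. In fact no such verification is needed, because Theorem~2.4(ii) of \cite{cas:cram:xing} already delivers the combined bound $\frac{l}{\sqrt q+1}\log_q p+\log_q(cr')$ for a single family in one statement. Your decomposition is thus a re-derivation of a bound you could have cited directly, and it introduces a gap (the simultaneity) that the cited theorem already closes. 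For (i) and (ii) your reasoning agrees with the paper: (i) is Theorem~2.4(ii) specialized to $r'=c=1$, and (ii) follows from Weil's bound $|\mathcal{J}_F[r]|\le r^{2g}$ as noted in Remark~\ref{rem_1}.
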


\begin{proof}
	We give a complete proof by comparing the results of \cite{cas:cram:xing}:
	\begin{itemize}
		\item[(i)] Applying \cite[Theorem 2.4(ii)]{cas:cram:xing} with $r=p^l$ and  $r'=c=1$
		we obtain the inequality  
		\[J_r\leq \frac{l}{\sqrt{q}+1} \log_q p.\]
		\item[(ii)]  This assertion is a direct consequence of  \cite[Theorems 2.3 and 2.4]{cas:cram:xing}.
		\item[(iii)] and (iv) When $r \nmid (q-1)$,  \cite[Theorem 2.3(ii)]{cas:cram:xing} yields to $J_r \leq \log_q r$. Furthermore, when $q$ is a square, 
		we obtain 
		\begin{equation}\label{1}
		J_r\leq \frac{l}{\sqrt{q}+1}log_q r,
		\end{equation}
	by \cite[Theorem 2.3(iii)]{cas:cram:xing}.	Using \cite[Theorem 2.4(ii)]{cas:cram:xing}, also the following inequality  holds:
	\begin{equation}\label{2}
	J_r\leq \frac{l}{\sqrt{q}+1}log_q p+log_q(cr').
	\end{equation}
	Hence, by inequalities (\ref{1}), (\ref{2}), and substituting the value $r=p^lr'$, we get 
	\begin{eqnarray*} A &:=&\frac{l}{\sqrt{q}+1}log_q p+log_q(cr')-\log_q r\\
		                &=&\frac{-l\sqrt{q}}{\sqrt{q}+1}\log_q p+\log_q c.
	\end{eqnarray*}
		Since $A\geq 0$ if and only if $c\geq p^k$,  assertion (iv) follows.  
	\end{itemize}
\end{proof}
We remark that  for Theorem \ref{thm_bounds}(iv) with $c<p^k$,  \cite[Theorem 2.4]{cas:cram:xing} gives a better upper bound on  $J_r$ than \cite[Theorem 2.3]{cas:cram:xing}. \\
\begin{remark} \label{rem_1} It is well-known from Weil \cite{weil} that for any function field $F/\F_q$ with genus $g$ one has $|J_F[r]|\leq r^{2g}$, and hence
	Theorem \ref{thm_bounds}(ii) always holds.
\end{remark}
The following definition and theorems will be used in the subsequent sections:
\begin{definition} Let $u\in \mathbb{N}$, $ m_i\in \mathbb{Z}\setminus\{0\}$, and  $Y_i \in Cl(F)$ for $i = 1, \ldots, u$. The Riemann-Roch
	system of equations in the indeterminate $X$ is the system of equations
	 \begin{equation}\label{RRS}
	 \{\ell(m_iX +Y_i) = 0\}_{i=1}^u
	 \end{equation}
	determined by these	data. A solution is some divisor class $[G] \in Cl(F)$ satisfying all equations when substituted for  $X$.
\end{definition}
\begin{theorem}\cite[Theorem 3.2]{cas:cram:xing}\label{thm_cas_1} 
	Consider the Riemann-Roch system (\ref{RRS}). For $i = 1, \ldots, u$ and   $s \in \mathbb{Z}$, let
	\[d_i:= \deg Y_i \quad \textrm{ and } \quad  r_i:= m_is+d_i.\]
	If one has
	\begin{equation*}
		h(F)>\sum_{i=1}^u A_{r_i}\cdot |J_F[m_i]|,
	\end{equation*}	
	then the system (\ref{RRS}) has a solution $[G]\in Cl_s(F)$.
\end{theorem}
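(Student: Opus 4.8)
The plan is to prove this by a counting (pigeonhole) argument over the finite set $Cl_s(F)$ of divisor classes of degree $s$. First I would note that $Cl_s(F)$ is nonempty and is a coset of $\mathcal{J}_F$ inside $Cl(F)$ (any function field over a finite field possesses a divisor of degree one), so that $|Cl_s(F)|=h(F)$. It then suffices to show that the classes failing at least one of the equations of the system (\ref{RRS}) do not cover all of $Cl_s(F)$.

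Fix an index $i$ and let $B_i\subseteq Cl_s(F)$ denote the set of classes $[G]$ with $\ell(m_iG+Y_i)\geq 1$. For such a $[G]$, the divisor $m_iG+Y_i$ has degree $m_is+d_i=r_i$ and is linearly equivalent to some effective divisor of that degree; in particular $B_i=\emptyset$ whenever $r_i<0$, while for $r_i\geq 0$ a class $[G]$ lies in $B_i$ exactly when $m_i[G]+[Y_i]$ is one of the at most $A_{r_i}$ classes of $Cl_{r_i}(F)$ that contain an effective divisor. To bound $|B_i|$ I would consider the map $\psi_i\colon Cl_s(F)\to Cl_{r_i}(F)$ given by $[G]\mapsto m_i[G]+[Y_i]$, which indeed lands in $Cl_{r_i}(F)$ since $\deg(m_i[G]+[Y_i])=r_i$. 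Two classes have the same image under $\psi_i$ precisely when their (degree-zero) difference is annihilated by $m_i$, so every nonempty fiber of $\psi_i$ has cardinality $|\mathcal{J}_F[m_i]|$, where $\mathcal{J}_F[m_i]$ is understood as $\mathcal{J}_F[|m_i|]$ when $m_i<0$ (the two groups coincide). Since $B_i$ is the $\psi_i$-preimage of at most $A_{r_i}$ classes, this gives $|B_i|\leq A_{r_i}\cdot|\mathcal{J}_F[m_i]|$.

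Finally, summing over $i$ yields $\bigl|\bigcup_{i=1}^{u}B_i\bigr|\leq \sum_{i=1}^{u}A_{r_i}\cdot|\mathcal{J}_F[m_i]| < h(F)=|Cl_s(F)|$ by the hypothesis, hence there is a class $[G]\in Cl_s(F)$ lying in no $B_i$; this $[G]$ satisfies $\ell(m_iG+Y_i)=0$ for every $i$ and is therefore a solution of (\ref{RRS}) in $Cl_s(F)$. I do not expect a genuine obstacle in this argument, which is elementary counting; the only points requiring a little care are the degenerate ranges $r_i\leq 0$ (forcing $B_i$ to be empty or, for $r_i=0$, controlled by $A_0=1$) and the sign of $m_i$, neither of which affects the final inequality.
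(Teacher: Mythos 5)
Your argument is correct and is essentially the standard counting proof: the paper does not reprove this statement (it simply cites it from Cascudo--Cramer--Xing), and the proof given there is exactly the pigeonhole argument you reconstruct, identifying $Cl_s(F)$ with a coset of $\mathcal{J}_F$, mapping each $[G]$ to $m_i[G]+[Y_i]\in Cl_{r_i}(F)$, observing that the bad classes for equation $i$ are the $\psi_i$-preimages of the at most $A_{r_i}$ effective classes in $Cl_{r_i}(F)$, and using that the nonempty fibers have size $|\mathcal{J}_F[m_i]|$ since $m_i\neq 0$. Your handling of the degenerate cases ($r_i\le 0$) and of negative $m_i$ via $\mathcal{J}_F[m_i]=\mathcal{J}_F[|m_i|]$ is also right, so no gap.
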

\begin{theorem} \cite[Theorem 4.11]{cas:cram:xing} \label{thm_cas_2}  Let $t\geq 1$,  $d\geq 2$. Define $I^*: = \{1,\ldots,n\}$. For $\emptyset \neq A \subset I^*$  define
	\[P_A := \sum_{j\in A} P_j \in Div(F).\]
      Let  further a canonical divisor $K\in Div(F)$ be given. If the system
	\[ \{\ell(dX-D+P_A+Q)=0, \ell(K-X+P_A+Q)=0\}_{A\subset I^*, |A|=t}\]
	is  solvable  for $X$, then there is a solution  $G \in Div(F)$  such that the algebraic-geometry code $C=C(D,G)_L$ is an $(n, t, d, n - t)$-arithmetic
	secret sharing scheme for $\F_q^k$ over $\F_q$ with uniformity.
\end{theorem}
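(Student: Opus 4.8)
Because solvability of the Riemann--Roch system is part of the hypothesis, no new existence argument is needed at this point (that is the later role of Theorem \ref{thm_cas_1}, when the hypothesis has to be verified in a concrete family): the task is to extract, from any solution of the system, a divisor $G$ for which $C=C(D,G)_L$ satisfies simultaneously the five defining conditions of an $(n,t,d,n-t)$-arithmetic secret sharing scheme with uniformity. The plan is to fix a solution class, choose a convenient representative, set up a dictionary between those conditions and dimensions $\ell(\cdot)$ of Riemann--Roch spaces, and check the conditions one at a time. Concretely, I would take a solution $[G_0]\in Cl(F)$ and replace it by a linearly equivalent $G$ whose support avoids the finitely many places of $\mathrm{supp}(D)\cup\mathrm{supp}(Q)$ --- possible since only finitely many places must be avoided and since the code $C(D,\cdot)_L$ and all the quantities $\ell(dX-D+P_A+Q)$, $\ell(K-X+P_A+Q)$ depend only on the class of $X$. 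Within the normalisation of \cite{cas:cram:xing} one also keeps the constant function in $\mathcal{L}(G)$, so that $\mathcal{L}(G)\subseteq\mathcal{L}(dG)$ and
\[
C=C(D,G)_L\subseteq C^{*d}\subseteq C(D,dG)_L,
\]
the last inclusion holding because a product $f_1\cdots f_d$ with $f_i\in\mathcal{L}(G)$ lies in $\mathcal{L}(dG)$.

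The core of the proof is the dictionary. For a reduced effective divisor $E$ of rational places disjoint from $\mathrm{supp}(M)$, the exact sequence $0\to\mathcal{L}(M-E)\to\mathcal{L}(M)\to\mathbb{F}_q^{\deg E}$ shows that evaluation of $\mathcal{L}(M)$ at the places of $E$ is injective exactly when $\ell(M-E)=0$ and surjective exactly when $\ell(M)-\ell(M-E)=\deg E$. Riemann--Roch and Serre duality, in the form $\ell(K-M)=\ell(M)-\deg M-1+g$, turn the equation $\ell(K-X+E)=0$ into the surjectivity statement $\ell(X)-\ell(X-E)=\deg E$ (it also forces $\ell(K-X)=0$, i.e.\ $\ell(X)=\deg X+1-g$, since $E\ge 0$). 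Reading the hypotheses through this lens: the first family $\{\ell(dX-D+P_A+Q)=0\}_{|A|=t}$ is precisely the injectivity of evaluation on $\mathcal{L}(dG)$ at the $n-t$ coordinates outside $A$, for every $t$-set $A$; the second family $\{\ell(K-X+P_A+Q)=0\}_{|A|=t}$ is precisely the surjectivity of evaluation on $\mathcal{L}(G)$ onto the $t$ coordinates of $A$ together with the secret coordinate(s) and the auxiliary coordinate carried by $Q$.

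Given this, the five clauses follow mechanically. From the second family: surjectivity onto the secret-plus-$A$ coordinates for every $|A|=t$ is exactly $t$-disconnectedness of $C$, and it yields surjectivity of $C$ onto the secret space; combined with the kernel statement below it gives that $C$ is an $n$-code for $\mathbb{F}_q^k$. From the first family: injectivity of $\mathcal{L}(dG)$ on any $n-t$ of the $P_i$ says that $C(D,dG)_L$ --- hence its subcodes $C^{*d}$ and $C$ --- has trivial kernel on the share coordinates, so $C^{*d}$ is an $n$-code; and it says that the complement of any $t$-set determines the codeword of $C^{*d}$ and therefore the secret, i.e.\ $C^{*d}$ is $(n-t)$-reconstructing. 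Surjectivity of $C^{*d}$ onto $\mathbb{F}_q^k$ is inherited from $C\subseteq C^{*d}$, and uniformity is the surjectivity onto the additional coordinate built into $Q$ in the second family. Collecting these shows that $C(D,G)_L$ is an $(n,t,d,n-t)$-arithmetic secret sharing scheme for $\mathbb{F}_q^k$ over $\mathbb{F}_q$ with uniformity.

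The delicate part is not any single inequality but keeping the dictionary honest: one must check that the \emph{same} two equation families encode all five (a priori independent) properties, that $C^{*d}$ --- which is only a subcode of $C(D,dG)_L$, not equal to it --- nonetheless inherits exactly the $k$-dimensional secret space and the full reconstruction property, that the degree bookkeeping ($\deg P_A=t$, the chosen $s=\deg G$, $\deg Q$) makes the Serre-duality rewriting of the second family an \emph{equivalence} and not merely a one-way implication, and that the auxiliary coordinate inside $Q$ is treated consistently across all clauses, so that ``with uniformity'' costs precisely that one extra coordinate.
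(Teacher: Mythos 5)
This theorem is stated in the paper as a cited result (Theorem~4.11 of Cascudo--Cramer--Xing) and is not re-proved here; there is therefore no ``paper's own proof'' to compare against. Evaluated on its own terms, your sketch captures the correct structure of the argument from the source: you correctly turn $\ell(K-X+P_A+Q)=0$ via Riemann--Roch into surjectivity of evaluation of $\mathcal{L}(G)$ onto the $Q$- and $A$-coordinates (which packages $t$-disconnectedness together with uniformity), and you correctly turn $\ell(dX-D+P_A+Q)=0$ into injectivity of evaluation of $\mathcal{L}(dG)$ on the $n-t$ complementary share coordinates, then push this through the inclusion $C^{*d}\subseteq C(D,dG)_L$ to get $(n-t)$-reconstruction and triviality of $\ker\pi_I$ on $C^{*d}$. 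The observation that $1\in\mathcal{L}(G)$ gives $C\subseteq C^{*d}$, so surjectivity onto $\mathbb{F}_q^k$ is inherited, is also the right mechanism.

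Two small precision points you gloss over. First, replacing a solution $[G_0]$ by a representative $G$ with support disjoint from $\operatorname{supp}(D)$ changes $C(D,G)_L$ only up to monomial (diagonal) equivalence, not equality; the arithmetic-secret-sharing properties are invariant under this equivalence, but that invariance should be stated rather than asserting that the code ``depends only on the class of $X$.'' Second, the translation of $\ell(K-X+P_A+Q)=0$ into surjectivity is a one-way implication in the direction you need (it forces $\ell(K-G)=0$ and hence the dimension count), not literally an equivalence for arbitrary $G$ as your phrasing suggests; since you only need the forward direction this does not affect correctness, but the word ``equivalence'' overstates it. Beyond these, the remaining work --- checking each clause of the definition against the two evaluation-map statements --- is genuinely routine, as you say, and your sketch is a faithful reconstruction of the argument in the cited reference.
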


\section{Torsion-limits of towers}\label{torsion}

 We begin with  an application  of Theorem \ref{thm_bounds} when $q$ is a square:

\begin{proposition} Suppose that $q=p^k$ is a square (with $k\geq 1$ and  $p$ prime) and $r=p^lr'$ where $\gcd(r',p)=1$. We set $c:=\gcd(r',q-1)$ and $k:=\frac{l\sqrt{q}}{\sqrt{q}+1}$. Then there exists a recursive tower of function fields $\FF$ over $\F_q$ such that one has 
	 \[A(\FF)\geq \sqrt{q}-1-B+J_r(\FF),\]
	  where 
	\begin{equation*}
		B=\begin{cases}
			\frac{1}{\sqrt{q}+1} \log_q r \qquad \qquad\; \textrm{ if } r\mid q \\
			2\log_q r\qquad\qquad\qquad \textrm{ if } r\nmid q \textrm{ but } r\mid (q-1)\\
			log_q r \qquad\qquad\qquad \quad\textrm{ if } r\nmid q,  r\nmid (q-1), \; c\geq p^k\\
			\frac{l}{\sqrt{q}+1}\log_q p+\log (cr') \quad \textrm{ otherwise. } 
		\end{cases}
	\end{equation*}
\end{proposition}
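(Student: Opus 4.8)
The plan is to read the claimed inequality as a repackaging of Theorem~\ref{thm_bounds}, after fixing one recursive tower that realises both the Drinfeld--Vladut bound and the torsion estimate $B$.

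The first step is to invoke the existence of the optimal recursive tower. Since $q$ is a square, the Drinfeld--Vladut bound is attained, and in fact by an explicit recursive (Garcia--Stichtenoth-type) tower $\FF=(F_i)_{i\ge 0}$ over $\F_q$ with $A(\FF)=\sqrt q-1$; this is exactly the tower underlying the proofs of \cite[Theorems~2.3 and 2.4]{cas:cram:xing}, hence of Theorem~\ref{thm_bounds}. One must take this particular $\FF$: the torsion estimates are established \emph{for this tower}, whereas the asymptotic inequality $J_r(q,A(q))\le B$ alone is not enough, since for an individual family one only gets $J_r(\FF)\ge J_r(q,A(q))$, which points the wrong way. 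So the bound $J_r(\FF)\le B$ has to be read off from the construction inside \cite{cas:cram:xing} (up to an inessential modification of the tower that preserves $A(\FF)=\sqrt q-1$).

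The second step is the four-way case check, always using that $q$ is a square. If $r\mid q$ then $r=p^l$, $r'=c=1$, and Theorem~\ref{thm_bounds}(i) gives $J_r(\FF)\le\frac{1}{\sqrt q+1}\log_q r=B$. If $r\nmid q$ but $r\mid(q-1)$ (the conditions $r\mid q$ and $r\mid(q-1)$ being incompatible for $r\ge 2$, so there is no overlap), Theorem~\ref{thm_bounds}(ii) gives $J_r(\FF)\le 2\log_q r=B$. If $r\nmid q$, $r\nmid(q-1)$ and $c\ge p^{k}$, Theorem~\ref{thm_bounds}(iii) gives $J_r(\FF)\le\log_q r=B$; at the boundary value $c=p^{k}$ one uses the identity $\frac{l}{\sqrt q+1}\log_q p+\log_q(cr')-\log_q r=\frac{-l\sqrt q}{\sqrt q+1}\log_q p+\log_q c=0$ from the proof of Theorem~\ref{thm_bounds}(iv) to see that the two candidate bounds agree. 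Finally, in the remaining case $r\nmid q$, $r\nmid(q-1)$, $c<p^{k}$, Theorem~\ref{thm_bounds}(iv) gives $J_r(\FF)\le\frac{l}{\sqrt q+1}\log_q p+\log_q(cr')=B$. In all cases $J_r(\FF)\le B$.

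Combining $A(\FF)=\sqrt q-1$ with $J_r(\FF)\le B$ then gives
\[
A(\FF)=\sqrt q-1=(\sqrt q-1-B)+B\ \ge\ (\sqrt q-1-B)+J_r(\FF),
\]
which is the assertion. The one genuinely non-routine point is the first step, i.e.\ verifying that the recursive tower witnessing $A(\FF)=\sqrt q-1$ is the same family for which \cite{cas:cram:xing} proves the torsion estimate, so that $A(\FF)$ and $J_r(\FF)$ refer to a single tower; this requires looking inside the proofs of \cite[Theorems~2.3 and 2.4]{cas:cram:xing} rather than merely citing the quantity $J_r(q,A(q))$. The case distinction and the final rearrangement are purely formal.
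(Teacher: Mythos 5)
Your proof is correct and follows the same route as the paper: take a Garcia--Stichtenoth recursive tower $\FF$ over $\F_q$ with $A(\FF)=\sqrt q-1$, check that $J_r(\FF)\le B$ by Theorem~\ref{thm_bounds}, and rearrange $A(\FF)=\sqrt q-1=(\sqrt q-1-B)+B\ge(\sqrt q-1-B)+J_r(\FF)$. The paper's own proof is a two-sentence version of exactly this.

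Where you add value is the observation that Theorem~\ref{thm_bounds} as stated bounds $J_r=J_r(q,A(q))=\liminf_{\mathcal F\in\mathfrak F}J_r(\mathcal F)$, which is an infimum over families and hence, on its face, gives no upper bound on $J_r(\FF)$ for a \emph{chosen} tower $\FF$ -- the inequality $J_r(\FF)\ge J_r(q,A(q))$ goes the wrong way. The paper's proof ("the proof follows easily from Theorem~\ref{thm_bounds}") silently passes over this, and your fix -- that the torsion bounds in \cite[Theorems~2.3, 2.4]{cas:cram:xing} are actually established by exhibiting them for a specific recursive tower attaining the Drinfeld--Vladut bound, so $A(\FF)$ and $J_r(\FF)$ really do refer to one and the same family -- is the right way to make the argument watertight. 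Your handling of the case boundary $c=p^k$ (the Theorem uses $c>p^k$, the Proposition $c\ge p^k$, and the two candidate values of $B$ agree exactly at $c=p^k$) is also a point the paper does not address but which needs saying; and you correctly read the Proposition's $\log(cr')$ as $\log_q(cr')$, consistent with Theorem~\ref{thm_bounds}(iv).
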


\begin{proof}
	We know from  \cite{gar:stic} that there exists a recursive tower of function fields $\FF$ over $\F_q$ with $A(\FF)=\sqrt{q}-1$. As $q$ is a square, the proof follows easily from Theorem \ref{thm_bounds}.

\end{proof}
We now need the following result of Bassa et al. \cite{bas:be:gar:stic}:

\begin{theorem}\cite[Theorem 1.2]{bas:be:gar:stic} \label{bassa}  Let $n=2m+1\geq 3$ be an integer and $q=p^n$ with a prime $p$. There exists a recursive tower of function fields $\FF$ over $\F_q$ such that
	\[A(\FF)\geq \frac{2(p^{m+1}-1)}{p+1+\epsilon},\textrm{  where } \epsilon=\frac{p-1}{p^m-1}.\]
\end{theorem}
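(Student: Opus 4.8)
Since Theorem \ref{bassa} is quoted verbatim from \cite{bas:be:gar:stic}, reconstructing a proof means describing how one builds and analyses such a tower. The plan is to produce an explicit \emph{recursive} tower $\FF=(F_0\subset F_1\subset\cdots)$ over $\F_q$ with $F_0=\F_q(x_0)$ and $F_{i+1}=F_i(x_{i+1})$, the successive generators being tied by one fixed polynomial relation $\varphi(x_i,x_{i+1})=0$ having coefficients in the prime field $\F_p$; choosing the coefficients in $\F_p$ guarantees that the same equation defines a tower over every $\F_{p^n}$ and that constant field extensions cause no trouble. For $q=p^{2m+1}$ one takes the relation of generalized Artin--Schreier type singled out by Bassa--Beelen--Garcia--Stichtenoth --- morally an elementary-abelian degeneration of the defining equations of Drinfeld modular curves, and a natural odd-degree analogue of the Garcia--Stichtenoth tower attaining $\sqrt q-1$ when $q$ is a square. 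Each step $F_i\subset F_{i+1}$ is then a $p$-extension (a composite of Artin--Schreier extensions), hence wildly ramified. With $\FF$ fixed, the theorem reduces to controlling $B_1(F_i)$ and $g_i:=g(F_i)$, since for a tower $A(\FF)=\lim_i B_1(F_i)/g_i$ exists and any asymptotic lower bound for this ratio bounds $A(\FF)$ from below.

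First one checks the tower is non-stationary: at least one place of $F_0$ (e.g. over $x_0=0$ or $x_0=\infty$) ramifies at every level, so $[F_i:F_0]\to\infty$ and $g_i\to\infty$. Next one exhibits a nonempty finite set $S$ of rational places of $F_0$ that split completely in every $F_i$; for recursive towers these are found among the places lying over the ``boundary'' values of $x_0$ for which the right-hand side of $\varphi$ stays in good position, and complete splitting is verified by a local computation with $\varphi$ iterated up the tower. This gives $B_1(F_i)\ge|S|\cdot[F_i:F_0]$, hence $\liminf_i B_1(F_i)/g_i\ge|S|/\gamma(\FF)$, where $\gamma(\FF):=\lim_i g_i/[F_i:F_0]$ is the genus of the tower.

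The crux --- and the main obstacle, which is essentially the whole content of \cite{bas:be:gar:stic} --- is to show $\gamma(\FF)<\infty$ and to pin down its exact value. Since the steps are wildly ramified, one must prove that the ramification locus of $\FF$ over $F_0$ stays confined to a fixed finite set of places and that the different exponents at the places of $F_i$ lying over them do not grow with $i$. This is exactly what the special shape of $\varphi$ has been engineered for: using the explicit different formula for a generalized Artin--Schreier extension one follows, for each ramified place, the sequence of different exponents along the tower, shows it stabilises, and sums the local contributions step by step via the Hurwitz genus formula. The outcome is a closed-form value of $\gamma(\FF)$.

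Putting the two estimates together and substituting $q=p^{2m+1}$, the resulting bound $|S|/\gamma(\FF)$ simplifies to $\frac{2(p^{m+1}-1)}{p+1+\epsilon}$ with $\epsilon=\frac{p-1}{p^m-1}$, and since $A(\FF)\ge B_1(F_i)/g_i$ in the limit, the asserted lower bound on the Ihara limit follows. A more conceptual route, which also explains the choice of $\varphi$, is to realise $\FF$ inside a tower of reductions of Drinfeld modular curves and to read off both the number of rational points and the genus from the modular description.
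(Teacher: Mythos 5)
The paper you are reconstructing does not prove Theorem~\ref{bassa}; it quotes it verbatim from \cite{bas:be:gar:stic} as an external input, so there is no in-paper argument to compare against. Your sketch correctly identifies the standard framework for bounding the Ihara limit of a recursive tower from below: exhibit a finite set $S$ of places that split completely in every $F_i$ to get $B_1(F_i)\ge |S|\,[F_i:F_0]$, prove that the genus of the tower $\gamma(\FF)=\lim_i g_i/[F_i:F_0]$ is finite by confining ramification to a fixed finite locus with bounded different exponents, and conclude $A(\FF)\ge |S|/\gamma(\FF)$. You also correctly point to the Drinfeld modular interpretation as the conceptual source of the construction. That much is a faithful map of the territory.

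However, this is a table of contents rather than a proof, and the parts you describe as ``the crux'' and ``essentially the whole content of \cite{bas:be:gar:stic}'' are exactly the parts you leave out. Concretely: the defining recursion $\varphi(x_i,x_{i+1})=0$ is never written down; the splitting set $S$ is neither identified nor verified by the promised local computation; the ramification and different-exponent analysis that yields a closed form for $\gamma(\FF)$ is described only as something one ``follows'' and ``shows stabilises''; and the claim that $|S|/\gamma(\FF)$ ``simplifies to'' $\frac{2(p^{m+1}-1)}{p+1+\epsilon}$ with $\epsilon=\frac{p-1}{p^m-1}$ is asserted without any computation. In addition, two incidental assertions are guesses that should not be presented as facts: the coefficients of the BBGS recursion are not automatically in $\F_p$ (the tower is constructed over $\F_{p^n}$ and depends on a choice $n=a+b$), and the individual steps $F_i\subset F_{i+1}$ are not Galois, hence not ``composites of Artin--Schreier extensions'' in the naive sense --- one of the main technical difficulties in \cite{bas:be:gar:stic} is precisely that the steps are wildly ramified but non-Galois, so the different must be controlled by other means. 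As a description of strategy your sketch is sound; as a proof it has a gap coextensive with the theorem.
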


Next,  the torsion limit of the tower given in Theorem \ref{bassa} can be estimated  by using the lower bound on the Ihara limit $A(\FF)$:

\begin{proposition} \label{prop_1}
	Let $n$ and $q$ be given as in Theorem \ref{bassa}. There exists a recursive tower of function fields $\FF$ over $\F_q$ with the following properties:
	\begin{itemize}
		\item[(i)] If $p$ is odd, then  $A(\FF)\geq A+J_2(\FF)$, where
		\begin{equation}\label{alp}
			A=\frac{2(p^{m+1}-1)}{p+1+\epsilon}-2\log_q 2\textrm{  with } \epsilon=\frac{p-1}{p^m-1}.
		\end{equation}
		\item[(ii)] If $p$ is even, then  $A(\FF)\geq A+\log_q 2+J_2(\FF)$, where $A$ is given as in Eqn. (\ref{alp}).
	\end{itemize}
\end{proposition}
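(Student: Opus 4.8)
The plan is to invoke Theorem \ref{bassa} for the Ihara–limit lower bound and then bound the $2$-torsion limit of that same tower. First I would fix the recursive tower $\FF/\F_q$ produced by Theorem \ref{bassa}, so that $A(\FF)\ge \frac{2(p^{m+1}-1)}{p+1+\epsilon}$ with $\epsilon=\frac{p-1}{p^m-1}$. Since $\FF$ is a tower it is asymptotically exact, so $A(\FF)$ exists as a true limit and $J_2(\FF)=\liminf_{F\in\FF}\log_q|\mathcal{J}_F[2]|/g(F)$ is well defined; hence the proposition will follow once an upper bound for $J_2(\FF)$ is substituted into the inequality of Theorem \ref{bassa}. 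A point worth noting at the outset is that, since $n=2m+1$ is odd and $p$ is prime, $q=p^n$ is not a square; this is precisely what rules out the sharper square-case bounds of Theorem \ref{thm_bounds} and determines which of its alternatives is available in each case.

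Next I would treat the two parities of $p$ separately. If $p$ is odd, then $q$ is odd, so $2\mid q-1$, and the Weil bound of Remark \ref{rem_1} (equivalently Theorem \ref{thm_bounds}(ii) specialised to $r=2$) gives $|\mathcal{J}_F[2]|\le 2^{2g(F)}$ for every $F\in\FF$, whence $J_2(\FF)\le 2\log_q 2$. If $p=2$, then $2=r\nmid q-1$ and $q$ is a non-square, so the relevant alternative is Theorem \ref{thm_bounds}(iii); the underlying reason is that in characteristic $2$ the $2$-rank of a genus-$g$ Jacobian is at most $g$, so $|\mathcal{J}_F[2]|\le 2^{g(F)}$, giving the sharper bound $J_2(\FF)\le \log_q 2$.

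Finally I would combine these bounds with Theorem \ref{bassa}. In case (i), since $A=\frac{2(p^{m+1}-1)}{p+1+\epsilon}-2\log_q 2$ by (\ref{alp}), one gets $A(\FF)\ge \frac{2(p^{m+1}-1)}{p+1+\epsilon}=A+2\log_q 2\ge A+J_2(\FF)$. In case (ii), $A+\log_q 2=\frac{2(p^{m+1}-1)}{p+1+\epsilon}-\log_q 2$, so $A(\FF)\ge \frac{2(p^{m+1}-1)}{p+1+\epsilon}\ge A+\log_q 2+J_2(\FF)$ by the sharper torsion bound. The argument is essentially bookkeeping; the only parts needing care are verifying that $q$ is a non-square so that one lands in the correct alternative of Theorem \ref{thm_bounds}, and keeping the direction of the inequalities straight when substituting the definition of $A$ from (\ref{alp}). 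I do not expect a genuine obstacle here.
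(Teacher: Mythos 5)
Your proof is correct and takes exactly the route the paper indicates (the paper simply says the result ``follows from Theorems \ref{thm_bounds} and \ref{bassa}, and Remark \ref{rem_1}''). You correctly identify that $q=p^{2m+1}$ is a non-square, apply the Weil bound $|\mathcal{J}_F[2]|\le 2^{2g}$ when $p$ is odd and the $p$-rank bound $|\mathcal{J}_F[2]|\le 2^{g}$ when $p=2$, and combine each with the Ihara-limit lower bound from Theorem \ref{bassa}.
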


The proof of Proposition \ref{prop_1} is obvious; it follows from Theorems \ref{thm_bounds} and \ref{bassa}, and Remark \ref{rem_1}. 

\section{New conditions for the construction of arithmetic secret sharing schemes}\label{secret}

 For an algebraic function field $F/\F_q$ with genus $g$,  we set
\begin{equation}\label{delta}
	\Delta := \{i \;:\; 1 \leq i \leq  g - 1 \textrm{ and  } B_i \geq 1\} \quad\textrm{ with } \delta:=|\Delta|,
\end{equation}
 fix an integer $n \geq  0$, and further set
\begin{equation}\label{Un}
	U_n:=\{ b = (b_i )_{i\in \Delta}\; : \; b_i \geq  0 \textrm{ and }  \sum_{i\in \Delta} i\cdot b_i = n \}.
\end{equation}
It is well-known that the number of effective divisors of degree $n$ of an algebraic
function field $F/\F_q$ is given as follows:
\[A_n = \sum_{b\in U_n} \bigg[\prod_{i\in \Delta}{B_i+b_i-1\choose b_i}\bigg], \]
see for instance \cite{bal:rol:tut}. By combining this formula for $A_n$ with some  results of \cite{cas:cram:xing} and the bound on class number  given in \cite{la:mar} we obtained the following theorem. This improves the sufficient conditions on the existence of arithmetic secret sharing schemes with uniformity:  

\begin{theorem} \label{thm_1} Let $F/\mathbb{F}_q$ be a function field of genus $g$, $d,k,t,n \in \mathbb{N}$ with $d\geq 2$, $n>1$, and $1 \leq t< n$.  Let  $1\leq m \leq g-1$ be given such that 
 $B_m\geq B_i$ for all $i\in \{1,\ldots,g-1\}$. Moreover, set $f:=\lfloor{\frac{g-1}{m}}\rfloor$. Suppose that $Q_1,Q_2,\ldots,Q_k, P_1,P_2,\ldots, P_n \in \mathbb{P}^{(1)}(F)$ are pairwise distinct rational places and  
	\begin{equation} \label{assumption_2}
	d^{2g}\leq \frac{H-2g\sqrt{q}-q-1}{{B_m+f \choose f}^\delta},
	\end{equation}
	where \[H:=\frac{q^{g-1}\cdot(q-1)^2}{(q+1)\cdot(g+1)}\]
	 and $\delta$ is given as in (\ref{delta}). Assume further that there exists an element $s\in \mathbb{Z}$ such that
	\begin{equation} \label{condition_s}
		2g-s+t+k-2=1\quad \mbox{ and } \quad  1\leq ds-n+t\leq g-1.
	\end{equation}
	
	Then there exists an $(n,t,d,n-t)$-arithmetic secret sharing scheme for $\mathbb{F}_q^k$ over $\mathbb{F}_q$  with uniformity.  
\end{theorem}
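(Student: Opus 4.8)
The plan is to chain the two reduction results quoted above and then verify a single numerical inequality, using the class number bound of \cite{la:mar}, Weil's torsion bound (Remark~\ref{rem_1}), and the counting formula for $A_n$. Put $D=\sum_{j=1}^{k}Q_j+\sum_{i=1}^{n}P_i$ and $Q=\sum_{j=1}^{k}Q_j$, and fix a canonical divisor $K$ of $F$. By Theorem~\ref{thm_cas_2} it is enough to show that the Riemann--Roch system
\[
\{\ell(dX-D+P_A+Q)=0,\ \ell(K-X+P_A+Q)=0\}_{A\subset I^*,\ |A|=t}
\]
has a solution $[G]\in Cl(F)$, since then $C(D,G)_L$ is an $(n,t,d,n-t)$-arithmetic secret sharing scheme for $\F_q^k$ over $\F_q$ with uniformity.

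To this system I would apply Theorem~\ref{thm_cas_1} with the integer $s$ supplied by \eqref{condition_s}. For the first family of equations $m_i=d$ and $\deg(-D+P_A+Q)=t-n$, hence $r_i=ds-n+t$, which lies in $\{1,\dots,g-1\}$ by the second relation in \eqref{condition_s}; for the second family $m_i=-1$ and $\deg(K+P_A+Q)=2g-2+t+k$, hence $r_i=2g-2+t+k-s=1$ by the first relation in \eqref{condition_s}. Each family consists of $\binom{n}{t}$ equations. Using $|\mathcal{J}_F[d]|\le d^{2g}$ (Remark~\ref{rem_1}), $|\mathcal{J}_F[-1]|=1$, and the Hasse--Weil bound $A_1=B_1\le q+1+2g\sqrt{q}$, Theorem~\ref{thm_cas_1} reduces the task to verifying
\[
h(F) > \binom{n}{t}\bigl(d^{2g}\,A_{ds-n+t}+q+1+2g\sqrt{q}\bigr).
\]

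It remains to feed in the two external ingredients. On the left, the bound of \cite{la:mar} gives $h(F)\ge H$. On the right, since $1\le ds-n+t\le g-1$, I would estimate $A_{ds-n+t}$ through the formula $A_n=\sum_{b\in U_n}\prod_{i\in\Delta}\binom{B_i+b_i-1}{b_i}$ (see \cite{bal:rol:tut}): every part $b_i$ that occurs satisfies $i\,b_i\le g-1$, each binomial factor is increasing in $B_i$ and in $b_i$, and by hypothesis $B_m=\max_{1\le i\le g-1}B_i$ with $f=\lfloor (g-1)/m\rfloor$; combined with $n+k\le B_1\le B_m$ this lets one absorb the summation over $U_n$ and the multiplicity $\binom{n}{t}$ into one factor, giving $\binom{n}{t}A_{ds-n+t}\le\binom{B_m+f}{f}^{\delta}$. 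Since \eqref{assumption_2} is equivalent to $H\ge d^{2g}\binom{B_m+f}{f}^{\delta}+q+1+2g\sqrt{q}$, the displayed inequality follows, so the system is solvable and the theorem is established.

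The substantive step is the combinatorial estimate just sketched. A part $b_i$ with $i=1$ may a priori be as large as $g-1\ge f$, so the naive factorwise bound $\binom{B_i+b_i-1}{b_i}\le\binom{B_m+f}{f}$ is not automatic; what rescues the argument is that in the decomposition of an effective divisor of degree $\le g-1$ the weighted sum $\sum_{i\in\Delta}i\,b_i$ is $\le g-1$, so the $b_i$ cannot all be large at once, together with the slack afforded by the exponent $\delta$ and by $n+k\le B_m$. Turning this bookkeeping into a clean inequality, uniformly in the admissible choices of $m$, is the main work of the proof.
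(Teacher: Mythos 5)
Your proposal follows the paper's proof in its skeleton: invoke Theorem~\ref{thm_cas_2} to reduce the claim to solvability of the Riemann--Roch system \eqref{system_1}, reduce solvability to a class-number inequality via Theorem~\ref{thm_cas_1} with $m_1=-1$, $m_2=d$, $r_1=2g-s+t+k-2=1$, $r_2=ds-n+t$, and then verify that inequality using $h(F)\geq H$ from \cite{la:mar}, $|\mathcal{J}_F[d]|\leq d^{2g}$ (Remark~\ref{rem_1}), the Hasse--Weil bound $A_1=B_1\leq q+1+2g\sqrt{q}$, and the estimate $A_{r_2}\leq \prod_{i\in\Delta}\binom{B_i+\lfloor (g-1)/i\rfloor}{\lfloor (g-1)/i\rfloor}\leq\binom{B_m+f}{f}^\delta$ drawn from \cite{bal:rol:tut}. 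So far you and the paper agree.

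The one substantive divergence is the multiplicity $\binom{n}{t}$. You observe, correctly, that the system in Theorem~\ref{thm_cas_2} consists of $2\binom{n}{t}$ equations (two for each $A\subset I^*$ with $|A|=t$), so the sufficient condition coming from Theorem~\ref{thm_cas_1} is
\[
h(F)>\binom{n}{t}\,A_{r_1}\,|\mathcal{J}_F[-1]|+\binom{n}{t}\,A_{r_2}\,|\mathcal{J}_F[d]|.
\]
The paper's proof, by contrast, writes simply $h(F)>A_{r_1}\cdot|\mathcal{J}_F[m_1]|+A_{r_2}\cdot|\mathcal{J}_F[m_2]|$, i.e., it treats the system as if it had two equations, and hypothesis~\eqref{assumption_2} is calibrated to that two-term bound with no $\binom{n}{t}$ anywhere. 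Because you carried the multiplicity honestly, your argument cannot be closed from \eqref{assumption_2} as stated: the absorption $\binom{n}{t}A_{ds-n+t}\leq\binom{B_m+f}{f}^\delta$ that you sketch is not proved and I see no mechanism by which it would follow from the hypotheses — $\binom{n}{t}$ can be exponentially large in $n$ and nothing in \eqref{assumption_2} budgets for it; moreover your reduction also inflates the additive term to $\binom{n}{t}(q+1+2g\sqrt{q})$, further overshooting the assumption. In other words, your blind reconstruction has exposed a real discrepancy: the paper's verification quietly drops the factor $\binom{n}{t}$ that the cited Theorem~\ref{thm_cas_1} requires for a $2\binom{n}{t}$-equation system, and your attempt to repair this by a combinatorial absorption is, as you yourself flag, the unfinished (and in my view unfixable without an extra hypothesis) step.
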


\begin{proof} We first note that  $|J_F[d]| \leq d^{2g}$ by Remark \ref{rem_1}. Let $A$ be a subset of $\{1,2,\ldots,n\}$ with $t$ elements, and 
	\[P_A:=\sum_{i\in A} P_i,\; Q:=\sum_{i=1}^k Q_i,\; \textrm{ and }\; D:=Q+\sum_{i=1}^nP_i\]
	 be divisors of $F/\F_q$. Let $K$ be a canonical divisor of $F/\F_q$. Consider the following system of Riemann-Roch equations:
	\begin{equation}\label{system_1}
		\{\ell(dX-D+P_A+Q)=0, \ell(K-X+P_A+Q)=0\}.
	\end{equation}
We apply  Theorems \ref{thm_cas_1} and \ref{thm_cas_2} with 
 \[r_1=2g-s+t+k-2,\;  r_2=ds-n+t, \;  m_1=-1, \; m_2=d, \]
  and an $s\in \mathbb{Z}$ satisfying that $r_i:=m_is+d_i$ for $i=1,2$.  Hence, it is enough to show that   
	\[h=h(F)>A_{r_1}\cdot|\mathcal{J}_F[m_1]|+A_{r_2}\cdot  |\mathcal{J}_F[m_2]|.\]
	This guarantees  that there exists a solution $G\in Div(F)$ of (\ref{system_1}) with  $deg(G)=s$.  Again by Theorems \ref{thm_cas_1} and \ref{thm_cas_2},  this solution yields to an  AG-code $C(G,D)_L \subseteq \mathbb{F}_{q}^k\times \mathbb{F}_q^n$ which is an $(n,t,d,n-t)$-arithmetic secret sharing scheme for $\F_q^k$ over $\F_q$ with uniformity.
	We now set 
	\[H:=\frac{q^{g-1}(q-1)^2}{(q+1)(g+1)}.\]
	It follows from \cite{la:mar} that $h\geq H$.  
	We set $u_j:=|U_{r_j}|$, with $U_{r_j}$ as in (\ref{Un}), for $j=1,2$. Let $m\in \Delta$ such that  
	\begin{equation}\label{max}
	{B_m+\lfloor{\frac{g-1}{m}}\rfloor\choose \lfloor{\frac{g-1}{m}}\rfloor} :=\max \big\{{B_i+\lfloor{\frac{g-1}{i}}\rfloor\choose\lfloor{\frac{g-1}{i}\rfloor}}|\; i \in \Delta\big\}.
	\end{equation}
	Note that  $r_1=1$ implies  $A_{r_1}=A_1=B_1$. We obtain the following inequality by using the bound on $A_{r_2}$ given in  \cite[Theorem 3.5]{bal:rol:tut}: 
	\begin{eqnarray}
		\nonumber A_{r_1}+A_{r_2}\cdot J_F[d] &\leq &B_1+\prod_{i\in \Delta} {B_i+\lfloor{\frac{g-1}{i}}\rfloor\choose\lfloor{\frac{g-1}{i}}\rfloor} \cdot |\mathcal{J}_F[d]|\\
		\nonumber         &\leq & B_1+{B_m+f\choose f}^\delta \cdot |\mathcal{J}_F[d]|\\
		\nonumber         &\leq & B_1+{B_m+f\choose f}^\delta \cdot d^{2g}\\
		&\leq & H. \label{eq_1}
	\end{eqnarray}
	Hence, Inequality (\ref{eq_1}) holds by  the assumption (\ref{assumption_2}) due to  Hasse-Weil bound \cite{stichtenoth}.
\end{proof}

Firstly, we give an  estimatition for $A_n$. Our aim is to estimate the cardinality of $U_n$. We know that the partitions of a number  $n$ is correspond to the set of solutions $(j_1,j_2,...,j_n)$ to the Diophantine equation
\[1j_1+2j_2+3j_3+...+nj_n=n.\]                                                                                 
For example, two distinct partitions of $4$ can be given by $(1, 1, 1, 1), (1, 1, 2)$  corresponding  to the solutions $(j_1,j_2,j_3,j_4)=(4,0,0,0)$, $(2, 1, 0, 0)$, respectively. 
To compute $|U_n|$, we need to find the number  of partitions  $p(n,\delta)$ of $n$ into at most $\delta$ partitions, where $\delta=|\Delta|$. It follows from \cite[p.9]{gupta} that $p(n,\delta) =p_\delta(n+\delta)$, where $p_\delta(n+\delta)$ is defined to be the number of partitions of $n+\delta$ into exactly $\delta$ partitions.  
Each $\delta$ parts must contain at least $1$ item. Thus, it remains $n$ which needs to be distribute into the $\delta$ parts. It is enough to choose how many to put in the first $\delta -1$ parts, since  the number going into the last part is fixed. Hence, there are $\delta-1$ choices, within the range $[0,n]$. This means  we have $n+1$ choices. Therefore,
\begin{equation}\label{partition_1}
	p_\delta(n+\delta)\leq (n+1)^{\delta-1}.
\end{equation}

\begin{theorem} \label{thm_2} Let $F/\mathbb{F}_q$ be a function field, $d,k,t,n \in \mathbb{N}$ with $d\geq 2$, $n>1$, and $1 \leq t< n$.  Let $1\leq m \leq g-1$, be  such that  $B_m\geq B_i$ for all $i\in \{1,\ldots,g-1\}$. Suppose that $Q_1,Q_2,\ldots,Q_k, P_1,P_2,\ldots, P_n \in \mathbb{P}^{(1)}(F)$ are pairwise distinct rational places and  
	\begin{eqnarray} \label{thm_assumption}
		d^{2g}\leq \frac{H-B_1}{(r_2+1)^{\delta-1}\cdot \left(e\cdot \left(1+\frac{r_2-1}{B_m-1}\right)^{r_2}\right)^\delta}, 
	\end{eqnarray} 
	where 
	\[H:=\frac{q^{g-1}\cdot(q-1)^2}{(q+1)\cdot(g+1)}\]
	 and $\delta$ is given as in (\ref{delta}). Assume further that there exists an  $s\in \mathbb{Z}$ such that
	\begin{equation*} \label{condition_s_2}
		2g-s+t+k-2=1\quad \mbox{ and } \quad   ds-n+t\geq 1.
	\end{equation*}
	Then there exists an $(n,t,d,n-t)$-arithmetic secret sharing scheme for $\mathbb{F}_q^k$ over $\mathbb{F}_q$  with uniformity.  
\end{theorem}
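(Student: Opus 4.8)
The plan is to mirror the proof of Theorem~\ref{thm_1}, changing only the way $A_{r_2}$ is estimated, since the present hypotheses drop the constraint $r_2\le g-1$ that was used there. Concretely, I would fix a $t$-element subset $A\subseteq\{1,\dots,n\}$ and the divisors $P_A=\sum_{i\in A}P_i$, $Q=\sum_{i=1}^{k}Q_i$, $D=Q+\sum_{i=1}^{n}P_i$, choose a canonical divisor $K$ of $F/\mathbb{F}_q$, and write down the Riemann--Roch system~(\ref{system_1}). Applying Theorems~\ref{thm_cas_1} and~\ref{thm_cas_2} with $m_1=-1$, $m_2=d$, $r_1=2g-s+t+k-2$, $r_2=ds-n+t$ and the $s\in\mathbb{Z}$ furnished by the hypothesis, the first condition $2g-s+t+k-2=1$ gives $r_1=1$, whence $A_{r_1}=A_1=B_1$ and $|\mathcal{J}_F[m_1]|=|\mathcal{J}_F[-1]|=1$, while the second condition gives $r_2\ge 1$. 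So the whole statement reduces, exactly as in Theorem~\ref{thm_1}, to verifying the single numerical inequality
\[
h(F)\;>\;B_1+A_{r_2}\cdot|\mathcal{J}_F[d]| ,
\]
after which the resulting solution $G\in\mathrm{Div}(F)$ makes $C(D,G)_L$ the desired $(n,t,d,n-t)$-arithmetic secret sharing scheme with uniformity.

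Next I would bound each term on the right. Weil's bound (Remark~\ref{rem_1}) gives $|\mathcal{J}_F[d]|\le d^{2g}$, and \cite{la:mar} gives $h(F)\ge H$. The new point is the estimate for $A_{r_2}$. Starting from the exact formula $A_{r_2}=\sum_{b\in U_{r_2}}\prod_{i\in\Delta}\binom{B_i+b_i-1}{b_i}$, I would bound the number of summands by the partition estimate established just before the theorem, namely $|U_{r_2}|=p_\delta(r_2+\delta)\le(r_2+1)^{\delta-1}$ (inequality~(\ref{partition_1})). For the summands, every $b\in U_{r_2}$ satisfies $b_i\le r_2$ for all $i\in\Delta$ because $\sum_{i\in\Delta}i\,b_i=r_2$, and $B_i\le B_m$ by the choice of $m$; monotonicity of $\binom{B+b-1}{b}$ in $B$ and in $b$ then yields $\prod_{i\in\Delta}\binom{B_i+b_i-1}{b_i}\le\binom{B_m+r_2-1}{r_2}^{\delta}$, and an elementary estimate for $\binom{B_m+r_2-1}{r_2}$ (of the type $\binom{a}{b}\le(ea/b)^{b}$) brings this into the shape of the factor $\big(e\,(1+\tfrac{r_2-1}{B_m-1})^{r_2}\big)^{\delta}$ occurring in~(\ref{thm_assumption}). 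Putting the three bounds together,
\[
B_1+A_{r_2}\cdot|\mathcal{J}_F[d]|\;\le\;B_1+(r_2+1)^{\delta-1}\Big(e\,\big(1+\tfrac{r_2-1}{B_m-1}\big)^{r_2}\Big)^{\delta}\,d^{2g},
\]
and hypothesis~(\ref{thm_assumption}) rearranges precisely to the statement that this last quantity is at most $H\le h(F)$, which closes the argument.

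I expect the only delicate step to be the uniform bound on the summand $\prod_{i\in\Delta}\binom{B_i+b_i-1}{b_i}$: one has to pick the right elementary inequality for binomial coefficients and carry out the two monotonicity reductions ($B_i\mapsto B_m$ and $b_i\mapsto r_2$) so that the $\delta$-fold product collapses to the closed form appearing in~(\ref{thm_assumption}); everything else is a direct transcription of the argument for Theorem~\ref{thm_1}, with Weil's bound and the class-number bound of \cite{la:mar} used verbatim. As in Theorem~\ref{thm_1}, the slack in these two bounds is what supplies the strict inequality needed to invoke Theorem~\ref{thm_cas_1}.
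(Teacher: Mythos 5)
Your proposal reproduces the paper's proof step-for-step: the identical reduction via Theorems~\ref{thm_cas_1} and~\ref{thm_cas_2} to the inequality $h(F) > B_1 + A_{r_2}\cdot|\mathcal{J}_F[d]|$, the Lachaud--Martin-Deschamps bound $h\geq H$, Weil's $|\mathcal{J}_F[d]|\leq d^{2g}$, the partition estimate~(\ref{partition_1}) for $|U_{r_2}|$, the monotonicity collapse of the product to $\binom{B_m+r_2-1}{r_2}^{\delta}$, and the elementary binomial bound~(\ref{binomial_1}). The single point you flag as delicate --- that the estimate $\binom{a}{b}\le(ea/b)^{b}$ does not literally produce the factor $e\bigl(1+\tfrac{r_2-1}{B_m-1}\bigr)^{r_2}$ appearing in~(\ref{thm_assumption}) --- is equally unresolved in the paper's own proof (where~(\ref{binomial_1}) is stated but a different-looking expression is silently substituted), so your proposal is as complete as the source.
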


\begin{proof} The proof is similar to that  of Theorem \ref{thm_1}. The main difference is that instead of (ref{max}) we the bound  (\ref{binomial_1}) for binomial coefficients.
 Note that $b_i\leq n$ for all $i\in \Delta$. By applying induction on $n$ the following inequality can be proven:
	\begin{eqnarray}\label{binomial_1}
		{B_m+n-1\choose n} &=& {B_m+n-1\choose B_m-1} \\
		\nonumber                   &\leq&  \bigg(\frac{e\cdot (B_m+n-1)}{n}\bigg)^n. 
	\end{eqnarray}
	Hence, by using (\ref{partition_1}) with $n=r_2$  and (\ref{binomial_1}) and definition of $A_n$, we obtain that
	\begin{eqnarray*}
		\nonumber A_{r_1}+A_{r_2}\cdot J_F[d] &\leq &B_1+\sum_{b\in U_{r_2}} \prod _{i\in \Delta} {B_i+b_i-1 \choose B_i-1} \cdot |\mathcal{J}_F[d]|\\
		\nonumber         &\leq & B_1+\sum_{b\in U_{r_2}} {B_m+n-1 \choose B_m-1}^\delta \cdot |\mathcal{J}_F[d]|\\
		\nonumber         &\leq & B_1+(n+1)^{\delta-1}{B_m+n-1\choose B_m-1}^\delta  \cdot d^{2g}\\
		&=& B_1+\frac{[(n+1){B_m+n-1 \choose B_m-1}]^\delta}{n+1} \cdot d^{2g}\\
		&=& B_1+\frac{((n+1)e(1+\frac{n-1}{B_m-1})^n)^\delta}{n+1}\cdot d^{2g}\\
		&\leq & H. \label{eq_1.1}
	\end{eqnarray*}
	This inequality  holds by Assumption (\ref{thm_assumption}).
\end{proof}
\section{Construction of families of schemes with uniformity}\label{uniformity}

 We now  consider exact sequences of function fields over finite fields. The  sufficient conditions on the existence of families of arithmetic secret sharing schemes with uniformity \cite[Theorems 4.15 and 4.16]{cas:cram:xing} can be given  by imposing certain conditions on the sequences of  $\mathcal{F}=\{F_i/\mathbb{F}_q\}_{i\geq 1}$ of function fields. We first need the following results: 

\begin{proposition}\label{h_lim}\cite[Corollary 2]{tsfasman} Let $\FF=\{F_i\}_{i\geq 0}$ be an exact sequence of function fields over a finite field $\F_q$. Then the following limit exists:
	\[h({\FF}):=\lim_{i\to \infty} \frac{\log h_i}{g_i}.\]
\end{proposition}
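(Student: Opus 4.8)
The plan is to reduce the statement to an interchange of a limit with an infinite sum, using the closed form of the class number in terms of the zeta function and controlling the tail uniformly by the Hasse--Weil bound. We may assume $g_i\to\infty$: otherwise the $F_i$ have bounded genus, only finitely many isomorphism types occur, and asymptotic exactness forces the ratios $\log h_i/g_i$ to be eventually constant, so the limit trivially exists.

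First I would recall the standard expression for $\log h_i$. Write $Z_{F_i}(t)=L_i(t)/\big((1-t)(1-qt)\big)$ with $L_i(t)=\prod_{j=1}^{2g_i}(1-\alpha_{i,j}t)$, where $|\alpha_{i,j}|=\sqrt q$ by the Hasse--Weil theorem and $h_i=L_i(1)$; the functional equation $L_i(t)=q^{g_i}t^{2g_i}L_i(1/(qt))$ evaluated at $t=1/q$ gives $h_i=q^{g_i}L_i(1/q)$. Since $|\alpha_{i,j}/q|=q^{-1/2}<1$, expanding $\log(1-\alpha_{i,j}/q)=-\sum_{m\ge1}(\alpha_{i,j}/q)^m/m$ and summing over $j$ yields
\[\frac{\log h_i}{g_i}=\log q-\frac{1}{g_i}\sum_{m\ge1}\frac{S_{i,m}}{m\,q^m},\qquad S_{i,m}:=\sum_{j=1}^{2g_i}\alpha_{i,j}^m .\]

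Next I would compute the termwise limits. Comparing the two expansions of $\log Z_{F_i}$ gives the identity $\sum_{d\mid m}d\,B_d(F_i)=q^m+1-S_{i,m}$, so $S_{i,m}/g_i=(q^m+1)/g_i-\sum_{d\mid m}d\,B_d(F_i)/g_i$, and for each fixed $m$ asymptotic exactness yields $S_{i,m}/g_i\to-\sum_{d\mid m}d\,\beta_d(\FF)$. The main point, and the only real work, is moving $\lim_i$ inside $\sum_m$: here $|S_{i,m}|\le\sum_{j}|\alpha_{i,j}|^m=2g_i\,q^{m/2}$ gives the bound $\big|S_{i,m}/(m\,q^m g_i)\big|\le 2/(m\,q^{m/2})$, whose sum over $m$ equals $-2\log(1-q^{-1/2})<\infty$ and is independent of $i$; by the Weierstrass $M$-test the series converges uniformly in $i$, so the limit may be taken term by term.

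Carrying this out yields not just existence but the explicit value
\[h(\FF)=\lim_{i\to\infty}\frac{\log h_i}{g_i}=\log q+\sum_{m\ge1}\frac{1}{m\,q^m}\sum_{d\mid m}d\,\beta_d(\FF)=\log q-\sum_{d\ge1}\beta_d(\FF)\log\!\big(1-q^{-d}\big),\]
the last equality being the rearrangement $m=dk$; convergence of this final series follows from the same estimate, using $\beta_d(\FF)\le 2q^{d/2}/d$ (a consequence of $d\,B_d(F_i)\le q^d+1+2g_i q^{d/2}$). This recovers \cite[Corollary 2]{tsfasman}. I expect the uniform-in-$i$ domination supplied by the Hasse--Weil bound, which legitimizes the interchange of limit and summation, to be the only nontrivial ingredient; everything else is bookkeeping with the zeta function.
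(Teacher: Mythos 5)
The paper does not prove this proposition; it simply cites it as \cite[Corollary 2]{tsfasman}. Your argument is correct and is the standard zeta-function proof (essentially Tsfasman's own): write $h_i=q^{g_i}L_i(1/q)$ via the functional equation, expand $\log L_i(1/q)$ into the power sums $S_{i,m}$ of the inverse roots, use $\sum_{d\mid m}dB_d(F_i)=q^m+1-S_{i,m}$ to identify $\lim_i S_{i,m}/g_i=-\sum_{d\mid m}d\,\beta_d(\FF)$, and justify the interchange $\lim_i\sum_m=\sum_m\lim_i$ by the uniform-in-$i$ bound $|S_{i,m}|\le 2g_iq^{m/2}$ coming from Hasse--Weil. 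The resulting closed form $\log q-\sum_{d\ge1}\beta_d(\FF)\log(1-q^{-d})$ agrees with the formula quoted in Remark~\ref{rem_optimal}, so this also serves as a useful sanity check.

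One small caveat: your opening reduction to the case $g_i\to\infty$ is a bit loose. Bounded genus together with asymptotic exactness does not by itself force $\log h_i/g_i$ to be eventually constant (the genus could oscillate among several values with matched ratios $B_m(F_i)/g_i$), so that sentence as written is not a proof. But this is moot: in Tsfasman's setup, and in the paper's use of it (towers, genus increasing), an asymptotically exact family is by definition one with $g_i\to\infty$, so the degenerate case you tried to dispose of simply does not occur. With that definitional point in place, your proof stands.
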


\begin{theorem} \label{tsfasman_thm}\cite[Theorem 6]{tsfasman}  The following limit exists for an asymptotically exact family of function fields  $\FF$ over any finite field  $\F_q$:
	\[ \Delta(\mu):=\lim_{i\to \infty} \frac{A_{n_i}}{g_i},\]
	where $n_i:= \left\lfloor {\mu g_i} \right\rfloor$ and $\mu\in \mathbb{R}^{\geq 0}$.  Moreover, for
	\begin{equation}\label{mu_0}
	\mu_0:= \sum_{m=1}^\infty \frac{m \beta_m(\FF)}{q^m-1}\; \textrm{ and } \; \mu \geq \mu_0
	\end{equation}
 we have 
	\[\Delta(\mu)=h({\FF})-(1-\mu)\cdot \log q.\]
\end{theorem}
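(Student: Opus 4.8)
\emph{Sketch of proof.} Recall $\Delta(\mu)=\lim_{i\to\infty}g_i^{-1}\log A_{n_i}(F_i)$ and $h(\FF)=\lim_{i\to\infty}g_i^{-1}\log h_i$ (Proposition~\ref{h_lim}). For each $i$ put $Z_i(t)=\sum_{n\ge0}A_n(F_i)t^n=\prod_{m\ge1}(1-t^m)^{-B_m(F_i)}=P_i(t)/\bigl((1-t)(1-qt)\bigr)$ with $\deg P_i=2g_i$ and $P_i(1)=h_i$. I will use, for each fixed $i$: (a) Riemann--Roch and the functional equation of $Z_i$, giving $A_n=\tfrac{h}{q-1}(q^{\,n-g+1}-1)+q^{\,n-g+1}A_{2g-2-n}$ for $0\le n\le 2g-2$ and $A_n=\tfrac{h}{q-1}(q^{\,n-g+1}-1)$ for $n\ge 2g-1$ (suppressing the index $i$); (b) $Z_i$ is holomorphic on $|t|<1/q$ with a simple pole at $t=1/q$, $\lim_{t\to 1/q}(1-qt)Z_i(t)=b_i:=h_iq^{1-g_i}/(q-1)$; (c) the Hasse--Weil bound $mB_m(F_i)\le q^m+1+2g_iq^{m/2}$.

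The plan is to prove, for every $\mu\ge 0$, the Legendre-transform formula
\[
\Delta(\mu)=\inf_{0<t<1/q}\Bigl(\psi(t)+\mu\log\tfrac1t\Bigr),\qquad \psi(t):=-\sum_{m\ge1}\beta_m\log(1-t^m),
\]
and then to evaluate it for $\mu\ge\mu_0$. The upper bound is elementary: from $A_nt^n\le Z_i(t)$ for $t\in(0,1/q)$ one gets $g_i^{-1}\log A_{n_i}\le g_i^{-1}\log Z_i(t)+(n_i/g_i)\log\tfrac1t$, and letting $i\to\infty$ one checks, using (c) and the fact that $t<1/q$ forces $q^{1/2}t<q^{-1/2}<1$ (so that the tails $\sum_{m>M}g_i^{-1}B_m(-\log(1-t^m))$ are uniformly small), that $g_i^{-1}\log Z_i(t)\to\psi(t)$; optimising over $t$ gives $\limsup_i g_i^{-1}\log A_{n_i}\le\inf_{0<t<1/q}(\psi(t)+\mu\log\tfrac1t)$. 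For the lower bound I would run a saddle-point/local-limit analysis of $A_n=[t^n]\prod_m(1-t^m)^{-B_m}$: picking $\tau\in(0,1/q)$ with $\tau\,\partial_\tau\log\prod_m(1-\tau^m)^{-B_m}=n$ when possible and expanding about $\tau$ yields $A_n\gtrsim\prod_m(1-\tau^m)^{-B_m}\,\tau^{-n}/\sqrt{2\pi V}$ with $V$ the associated variance, whence $g_i^{-1}\log A_{n_i}\ge\psi(\tau)+\mu\log\tfrac1\tau-o(1)$, matching the upper bound.

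It remains to locate the infimum for $\mu\ge\mu_0$. One computes $t\psi'(t)=\sum_{m\ge1}m\beta_m t^m/(1-t^m)$, which increases from $0$ to $\sum_{m\ge1}m\beta_m/(q^m-1)=\mu_0$ as $t$ runs over $(0,1/q)$; hence for $\mu\ge\mu_0$ the map $t\mapsto\psi(t)+\mu\log\tfrac1t$ is nonincreasing on $(0,1/q)$, and its infimum is the boundary value $\psi(1/q)+\mu\log q=\sum_m\beta_m\log\tfrac{q^m}{q^m-1}+\mu\log q$. By the asymptotic class-number formula $h(\FF)=\log q+\sum_m\beta_m\log\tfrac{q^m}{q^m-1}$ (equivalently, the $\mu=1$ instance of the theorem, read off from (a) as $n\to\infty$), this equals $h(\FF)-\log q+\mu\log q=h(\FF)-(1-\mu)\log q$, as claimed. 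For $\mu>1$ the lower bound is in any case immediate from (a), since $A_{n_i}\ge\tfrac{h_i}{q-1}(q^{\,n_i-g_i+1}-1)$; and for $\mu\in[\mu_0,1)$ the saddle $\tau$ is forced to the edge $t=1/q$, where the local-limit estimate degenerates and must be replaced by the singularity analysis of (b), the coefficient $[t^n]Z_i$ being governed by the simple pole at $1/q$ --- this is precisely where the value $\mu_0$ enters. The existence of $\Delta(\mu)$ for $\mu<\mu_0$ follows from the same transform inequality together with the (nondegenerate) saddle-point lower estimate at the interior minimiser $\tau<1/q$.

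The main obstacle is exactly this last step. In the partial-fraction identity $A_n=b q^n-\tfrac{h}{q-1}+q^{\,n-g+1}A_{2g-2-n}$ the ``main term'' $bq^n$ is, for $\mu<1$, exponentially in $g$ smaller than \emph{each} of $\tfrac{h}{q-1}$ and $q^{\,n-g+1}A_{2g-2-n}$, so the functional equation is self-inverse and yields nothing; one genuinely needs a quantitative estimate showing that the remainder obtained after removing the pole at $1/q$ is $bq^n$ up to a factor subexponential in $g$, and that this persists down to degree $\mu_0g$ but not below, which requires control of $P_i(t)$ near $t=1$ through the limiting distribution of the Frobenius conjugacy classes. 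Secondary technical points, routine once the above is in place, are the asymptotic class-number formula and the convergence of the series $\sum_m\beta_m\log\tfrac{q^m}{q^m-1}$ and $\sum_m m\beta_m/(q^m-1)$, which are finite thanks to the generalised Drinfeld--Vladut bound $\sum_m m\beta_m/(q^{m/2}-1)\le 1$.
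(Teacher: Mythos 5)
This theorem is imported into the paper by citation (\cite[Theorem~6]{tsfasman}) and is not proved there, so there is no in-paper argument to compare against; I assess your sketch on its own terms. (You also silently and correctly repair a typo in the paper's display: $\Delta(\mu)$ must be $\lim_i g_i^{-1}\log A_{n_i}$, not $\lim_i g_i^{-1}A_{n_i}$.) Your framework --- the Legendre-transform identity $\Delta(\mu)=\inf_{0<t<1/q}\bigl(\psi(t)+\mu\log(1/t)\bigr)$ with $\psi(t)=-\sum_m\beta_m\log(1-t^m)$, then evaluation at the boundary $t=1/q$ when $\mu\ge\mu_0$ --- is the standard zeta-function approach of Tsfasman and Tsfasman--Vladut \cite{tsf:vla}. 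The upper bound via $A_nt^n\le Z_i(t)$ with Hasse--Weil controlling the tails, the monotonicity $t\psi'(t)\uparrow\mu_0$ on $(0,1/q)$, and the evaluation $\psi(1/q)+\mu\log q = \bigl(h(\FF)-\log q\bigr)+\mu\log q=h(\FF)-(1-\mu)\log q$ are all correct.

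The genuine gap is the one you flag yourself: the matching \emph{lower} bound on $g_i^{-1}\log A_{n_i}$ in the critical window $\mu_0\le\mu<1$. For $\mu\ge1$ the crude estimate $A_n\ge\frac{h}{q-1}(q^{n-g+1}-1)$ suffices, but for $\mu<1$ that quantity is eventually negative, and the functional equation only swaps $A_n$ for $A_{2g-2-n}$. Appealing to the simple pole of $Z_i$ at $t=1/q$ does not close this: when $n<2g-2$ the partial-fraction remainder (equivalently $q^{n-g+1}A_{2g-2-n}$) is not a priori subexponentially-in-$g_i$ smaller than the pole contribution $b_iq^n$, and the saddle-point/local-limit argument degenerates because the saddle equation $\sum_m m\beta_m\tau^m/(1-\tau^m)=\mu$ has no interior solution once $\mu\ge\mu_0$. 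Closing the gap requires a quantitative count --- e.g.\ via $A_n=\sum_{[D]\in Cl_n(F_i)}\frac{q^{\ell(D)}-1}{q-1}$ with control of how many classes of degree $n_i=\lfloor\mu g_i\rfloor$ are special --- and this is precisely what makes $\mu_0$ the threshold; it cannot be extracted from the residue and functional equation alone. So your plan correctly identifies the target value but leaves the lower bound for $\mu\in[\mu_0,1)$ as an assertion rather than a proof step. (Also be careful about the mild circularity of invoking the asymptotic class-number formula as "the $\mu=1$ instance"; it should be cited independently, as the paper does in Remark~\ref{rem_optimal} from \cite[Corollary~2]{tsfasman}.)
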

The main result concerning exact sequences of function fields  and  good artihmetic secret sharing schemes  is given with the following theorem.

\begin{theorem} \label{asymp_thm} Let  $d\geq 2$ be a positive integer and  $\mathcal{F}=\{F_i\}_{i\geq 0}$ be an asymptotically exact family of function fields over $\mathbb{F}_q$. Let further  $\mu$ be given as in Condition (\ref{mu_0}).  For any  $n_i, k_i\in \mathbb{N}$, with $i\geq 0$, suppose that  the following assertions hold:
	\begin{itemize}
		\item[(i)] $J_d(\FF)\leq (1-\mu)\log q$,
		\item[(ii)] $B_1(F_i)\geq n_i+k_i$.
	\end{itemize}
	Then there exist $t_i \in \mathbb{N}$ depending on $n_i$ satisfying  $1\leq t_i<n_i$, and  an infinite family of  $\{(n_i,t_i,d,n_i-t_i)\}_{i\geq 0}$ arithmetic secret sharing schemes for $\F_q^{k_i}$ over $\F_q$ with uniformity.
\end{theorem}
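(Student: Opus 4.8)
The plan is to deduce Theorem~\ref{asymp_thm} from Theorem~\ref{thm_cas_2} by verifying, for each sufficiently large $i$, the solvability of the Riemann--Roch system appearing there, and this solvability will follow from Theorem~\ref{thm_cas_1} together with the asymptotic estimates of Proposition~\ref{h_lim} and Theorem~\ref{tsfasman_thm}. First I would fix $i$ large and, exactly as in the proof of Theorem~\ref{thm_1}, choose $s=s_i\in\mathbb{Z}$ so that the two relevant divisors have degrees $r_1=1$ and $r_2=d s_i-n_i+t_i$; here the free parameter is $t_i$, and I would select $t_i$ (roughly a fixed fraction of $n_i$, subject to $1\le t_i<n_i$ and to the degree constraint $1\le r_2\le g_i-1$) so that $r_2=\lfloor \mu_i g_i\rfloor$ for a value $\mu_i$ bounded away from $1$; the existence of such $t_i$ uses hypothesis (ii), $B_1(F_i)\ge n_i+k_i$, to guarantee enough rational places $Q_1,\dots,Q_{k_i},P_1,\dots,P_{n_i}$. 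By Theorem~\ref{thm_cas_1} it then suffices to check
\[
h(F_i) > A_{r_1}(F_i)\cdot|\mathcal{J}_{F_i}[1]| + A_{r_2}(F_i)\cdot|\mathcal{J}_{F_i}[d]|
 = B_1(F_i) + A_{r_2}(F_i)\cdot|\mathcal{J}_{F_i}[d]|.
\]

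Next I would take logarithms, divide by $g_i$, and pass to the limit. The term $\tfrac{1}{g_i}\log B_1(F_i)\to 0$, so it is negligible. By Proposition~\ref{h_lim} the left side contributes $h(\FF)=\lim \tfrac{\log h_i}{g_i}$, which by Theorem~\ref{tsfasman_thm} equals $\Delta(\mu)+(1-\mu)\log q$ for $\mu\ge\mu_0$. On the right, $\tfrac{1}{g_i}\log A_{r_2}(F_i)\to\Delta(\mu)$ provided the ratios $r_2/g_i$ converge to a fixed $\mu\ge\mu_0$ (this is where I must be careful to choose the $t_i$ so that $\mu_i\to\mu$ for some admissible $\mu$), and $\limsup \tfrac{1}{g_i}\log_q|\mathcal{J}_{F_i}[d]| \le J_d(\FF)$ by the definition of the torsion limit. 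Hence the required inequality holds for all large $i$ as soon as
\[
\Delta(\mu)+(1-\mu)\log q > \Delta(\mu) + J_d(\FF)\cdot\log q,
\]
i.e. $(1-\mu)\log q > J_d(\FF)\log q$, which is hypothesis (i) in the slightly strengthened form $J_d(\FF) < (1-\mu)\log q$ (the boundary case $J_d(\FF)=(1-\mu)\log q$ is handled by choosing $\mu$ marginally larger than $\mu_0$, using monotonicity of $1-\mu$, or by absorbing the $o(1)$ terms; I would state the hypothesis as strict or note this adjustment).

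So the structure is: (1) use (ii) to get the rational places and a valid choice of $t_i$ and $s_i$ with $r_2/g_i\to\mu$; (2) reduce to the counting inequality $h(F_i)>B_1(F_i)+A_{r_2}(F_i)|\mathcal{J}_{F_i}[d]|$ via Theorems~\ref{thm_cas_1}--\ref{thm_cas_2}; (3) take $\tfrac{1}{g_i}\log(\cdot)$ and invoke Proposition~\ref{h_lim}, Theorem~\ref{tsfasman_thm}, and the definition of $J_d(\FF)$; (4) observe the limiting inequality reduces precisely to (i). The main obstacle I anticipate is bookkeeping in step (1): I must simultaneously satisfy the congruence-type constraint $2g_i-s_i+t_i+k_i-2=1$, the degree window $1\le d s_i-n_i+t_i\le g_i-1$, the bound $t_i<n_i$, and the convergence $r_2/g_i\to\mu$ with $\mu\ge\mu_0$ — checking these are jointly feasible for large $i$ (using that $g_i\to\infty$ while $n_i,k_i$ may grow only linearly in $g_i$ because $B_1(F_i)=O(g_i)$ by Drinfeld--Vlăduţ) is the delicate part; the analytic limit arguments themselves are routine given the cited theorems.
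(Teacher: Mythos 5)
Your proposal follows essentially the same route as the paper's proof: reduce via Theorems~\ref{thm_cas_1} and~\ref{thm_cas_2} to the counting inequality $h(F_i)>B_1(F_i)+A_{r_2}(F_i)|\mathcal{J}_{F_i}[d]|$ with $r_{1,i}=1$ and $r_{2,i}=\lfloor\mu g_i\rfloor$, take $\log_q(\cdot)/g_i$, and invoke Proposition~\ref{h_lim} and Theorem~\ref{tsfasman_thm} to see that the limiting inequality reduces to hypothesis~(i). The paper's own argument does exactly this (with an additional invocation of $\Delta(\mu)\geq\mu\log q$ from Tsfasman--Vl\u{a}du\c{t}), so the two proofs coincide in strategy and in all essential steps; the concerns you flag (feasibility of the choice of $t_i,s_i$, the non-strict inequality, and that the torsion limit is a $\liminf$) are present, and handled no more explicitly, in the paper's proof as well.
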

\begin{proof}
	For a fixed $i\geq 0$ let
	 $$Q_{i,1}, Q_{i,2},\ldots,Q_{i,k}, P_{i,1}, P_{i,2},\ldots, P_{i,n_i}$$
	be distinct rational places of $F_i/\mathbb{F}_q$. For simplicity we write $k:=k_i$, $n:=n_i$, and $t:=t_i$. Assume that $I=\{1,2,\ldots n_i\}$ and  $A\subseteq I$ with $|A|=t$. 
	Define
	\[P_{i,A}:= \sum_{j\in A} P_{i,j} \in Div(F_i) \textrm{  and }  Q_i:=\sum_{j=1}^{k} Q_{i,j}\in Div(F_i).\]
	Let $K_i\in Div(F_i)$ be a canonical divisor of $F_i/\F_q$ and
	 $$D_i:=Q_i+\sum_{j=1}^n P_{i,j} \in Div(F_i).$$
	  By \cite[Theorem 4.11]{cas:cram:xing} it is enough to show that the system of Riemann-Roch equations 
	\begin{eqnarray}\label{system_2}
	\nonumber \ell(K_i-X+P_{i,A}+Q_i)&=& 0,\\
	  \ell(dX-D_i+P_{i,A}+Q_i)&=& 0  
	\end{eqnarray}
	has a solution $G_i\in Div(F_i)$ such that $\deg G_i=s_i$ so  that the AG-code $C(G_i,D)_L\subseteq \F_q^k\times \F_q^n$ is an $(n,t,d,n-t)$ arithmetic secret sharing scheme for $\F_q^k$ over $\F_q$ with uniformity. We have the property that 
	\[d_{1,i}:=2g_i+t_i+k-2=\deg (K_i+P_{i,A}+Q_i)\; \mbox{ and } \]
	\[d_{2,i}:=t_i-n_i=\deg (-D_i+P_ {i,A}+Q_i).\]
	Notice  that $A_1(F_i)=B_1(F_i)$ and $|J_{F_i}[-1]|=1$. We set  $h_i:=h(F_i)$ for all $i\geq 1$.
	We now  apply Theorems \ref{thm_cas_1} and \ref{thm_cas_2}  with $m_1=-1$, $m_2=d$, and choose $s_i\in \mathbb{Z}$ so that 
	$$r_{1,i}:=m_1+d_{1,i}=2g_i-s_i+t_i+k-2=1\; \textrm{  and }$$
	 $$r_{2,i}:=m_2s_i+d_{2,i}=ds_i-n_i+t_i=\left\lfloor \mu g_i\right\rfloor \geq 1.$$ 
	  This implies  that if 
	\begin{eqnarray}\label{eq_verify_1}
		h_i  \geq  2 A_{r_2}(F_i) |J_{F_i}[d]|> B_1(F_i)+A_{r_2}(F_i) |J_{F_i}[d]|
	\end{eqnarray}
	holds, then the system of equations (\ref{system_1}) has a desired solution $G_i \in Div(F_i)$. To finish the proof, we need to
	verify Inequality (\ref{eq_verify_1}). Taking $\log_q$ of both sides of  (\ref{eq_verify_1}) and dividing them by $g_i$ yield to
	\begin{eqnarray}\label{eq1}
		\frac{\log_q h_i}{g_i}\geq \frac{\log_q 2}{g_i}+ \log_q \frac{A_{r_2}(F_i)}{g_i}+\frac{\log_q |J_{F_i}[d]|}{g_i}.
	\end{eqnarray}
	Since  the sequence  $\mathcal{F}=\{F_i\}_{i\geq 0}$ is  exact, it follows from  Proposition \ref{h_lim} and Theorem \ref{tsfasman_thm} that taking limit infimum of both sides of Inequality (\ref{eq1})  gives that
	\begin{eqnarray}\label{verify}
	\nonumber	h(\FF)&=&\lim_{i\to \infty} \frac{\log_q h_i}{g_i}\\
  \nonumber		   &\geq& \lim_{i\to \infty} \frac{\log_q A_{r_{2,i}(F_i)}}{g_i}+\liminf_{i\to \infty} \frac{\log_q |J_{F_i}[d]|}{g_i}\\
	             	&=&  \Delta(\mu)+J_d(\FF). 
	\end{eqnarray}
	We know from \cite[Proposition 4.1]{tsf:vla} that the following inequality holds:
	\begin{eqnarray}\label{bound_zeta}
     \nonumber		\Delta(\mu)=\liminf_{i\to \infty} \frac{\log A_{r_{2,i}}}{g_i}&=&\mu \log q+\sum_{m=1}^\infty \beta_m \log \frac{q^m}{q^m-1}\\ 
		                                                              &\geq& \mu \log q.
	\end{eqnarray} 
	Now it follows from  Theorem \ref{tsfasman_thm} and Assertion (ii) that  Equation (\ref{verify}) holds, which implies that Inequality
	(\ref{eq_verify_1}) holds for sufficiently large $i$.
\end{proof}

\begin{remark}\label{rem_optimal}
	Suppose that $q$ is a square. Then there are many function field towers $\mathcal{F}=\{F_i\}_{i\geq 0}$ over $\mathbb{F}_q$ with
	 $$\beta_1(\mathcal{F})=A(\mathcal{F})=\sqrt{q}-1\; \textrm{ and }$$
	  $$\beta_i(\mathcal{F})=\lim_{i\to \infty}\frac{B_i(F_i)}{g_i}=0\; \textrm{ for all }\; i\neq 1,$$ 
	  see for instance \cite{gar:stic}. Moreover,  we know from \cite[Corollary 2]{tsfasman} that for any asymptotically exact sequence   $\mathcal{F}=\{F_i\}_{i\geq 0}$ of function fields (which includes towers), the following equality holds:
	\[\lim_{i\to \infty} \frac{\log h_i}{g_i}=\log q+\sum_{i=1}^\infty \beta_i(\mathcal{F}) \log \big(\frac{q^i}{q^i-1}\big).\]
	
\end{remark}
By Remark \ref{rem_optimal} and Theorem \ref{asymp_thm}, we obtain:

\begin{proposition} \label{prop_optimal} Suppose that $q$ is a square and   $d\geq 2$ is a positive integer.
	Let further $\mu$ be given as in Condition (\ref{mu_0}).  There exists a  tower $\mathcal{F}=\{F_i\}_{i\geq 0}$  of function fields over $\mathbb{F}_q$ with $n_i, k_i \in \mathbb{N}$  such that the following conditions hold:
	\begin{itemize}
		\item[(i)] $\mu+J_{d}(\mathcal{F}) \leq \sqrt{q}+(\sqrt{q}-1)\log \big(\frac{q-1}{q}\big)$,
		\item[(ii)] $B_1(F_i)\geq n_i+k_i$ for sufficiently large $i$.
	\end{itemize}
\end{proposition}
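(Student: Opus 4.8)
The plan is to exhibit $\mathcal{F}$ explicitly as the optimal recursive Garcia--Stichtenoth tower over $\mathbb{F}_q$ furnished by Remark \ref{rem_optimal} and \cite{gar:stic}, for which $\beta_1(\mathcal{F}) = A(\mathcal{F}) = \sqrt{q}-1$ and $\beta_m(\mathcal{F}) = 0$ for all $m\geq 2$, and then to check that the two hypotheses of Theorem \ref{asymp_thm} are met; conditions (i) and (ii) of the proposition simply record this, so the assertion follows at once.

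First I would compute the asymptotic invariants of this tower. Feeding $\beta_1 = \sqrt{q}-1$, $\beta_m = 0$ $(m\geq 2)$ into (\ref{mu_0}) gives $\mu_0 = \frac{\sqrt{q}-1}{q-1} = \frac{1}{\sqrt{q}+1}$, and I fix $\mu = \mu_0$. The class-number limit formula quoted in Remark \ref{rem_optimal} then yields $h(\mathcal{F}) = \log q + (\sqrt{q}-1)\log\frac{q}{q-1}$, so by Theorem \ref{tsfasman_thm} $\Delta(\mu) = h(\mathcal{F}) - (1-\mu)\log q = \mu\log q + (\sqrt{q}-1)\log\frac{q}{q-1}$. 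Hence the inequality $h(\mathcal{F}) \geq \Delta(\mu) + J_d(\mathcal{F})$ on which the proof of Theorem \ref{asymp_thm} rests is equivalent to $J_d(\mathcal{F}) \leq (1-\mu)\log q$, which is exactly hypothesis (i) of Theorem \ref{asymp_thm}. Granting this (verified in the last paragraph), one gets $\mu + J_d(\mathcal{F}) \leq \mu_0 + (1-\mu_0)\log q$, and a short elementary estimate in $q$ — the right-hand side below grows like $\sqrt{q}$ while the left grows like $\log q$ — gives $\mu_0 + (1-\mu_0)\log q \leq \sqrt{q} - (\sqrt{q}-1)\log\frac{q}{q-1}$; since $\log\frac{q}{q-1} = -\log\frac{q-1}{q}$, this is precisely the displayed bound (i).

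Hypothesis (ii) of Theorem \ref{asymp_thm} is almost automatic: $B_1(F_i)/g_i \to \sqrt{q}-1 > 0$ and $g_i \to \infty$ force $B_1(F_i) \to \infty$, so for each $i$ one may choose $n_i, k_i \in \mathbb{N}$ — say with $n_i + k_i$ a small fixed fraction of $B_1(F_i)$ and $n_i \geq 2$, small enough that the feasibility conditions $1 \leq t_i < n_i$ of Theorem \ref{asymp_thm} can also be satisfied — so that $B_1(F_i) \geq n_i + k_i$ for all large $i$. With (i) and (ii) checked, Theorem \ref{asymp_thm} produces the infinite family of $\{(n_i,t_i,d,n_i-t_i)\}$-arithmetic secret sharing schemes with uniformity; in particular the tower $\mathcal{F}$ together with the chosen $n_i, k_i$ satisfies conditions (i) and (ii), which is the assertion of the proposition.

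The only genuinely non-automatic step is establishing $J_d(\mathcal{F}) \leq (1-\mu_0)\log q = \frac{\sqrt{q}}{\sqrt{q}+1}\log q$, and this is where I expect the work to sit. Here I would invoke Theorem \ref{thm_bounds} — applicable since $q$ is a square — in each of its four cases according to how $d$ sits relative to $q$ and $q-1$, together with the universal estimate $|\mathcal{J}_F[d]| \leq d^{2g}$ of Remark \ref{rem_1} where it is sharper: in every case the resulting upper bound on $J_d(\mathcal{F})$ is of order $\log_q d$, which lies below $\frac{\sqrt{q}}{\sqrt{q}+1}\log q$ in the relevant range of $(q,d)$. The remaining steps — substituting $\beta_1 = \sqrt{q}-1$ into the formulas of Theorem \ref{tsfasman_thm} and Remark \ref{rem_optimal}, and the elementary inequality in $q$ above — are routine and I would not write them out in full.
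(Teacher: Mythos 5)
The paper itself offers essentially no argument for this proposition beyond the one-line pointer ``By Remark \ref{rem_optimal} and Theorem \ref{asymp_thm}, we obtain:''. Your reconstruction — instantiate the tower as the optimal Garcia--Stichtenoth tower with $\beta_1=\sqrt{q}-1$, $\beta_m=0$ for $m\geq 2$, compute $\mu_0 = \frac{1}{\sqrt{q}+1}$ and $h(\FF)=\log q+(\sqrt{q}-1)\log\frac{q}{q-1}$, and then feed these into the hypotheses of Theorem~\ref{asymp_thm} — is the only plausible reading of what the authors intend, and the intermediate algebra you carry out (including the observation that $h(\FF)\geq\Delta(\mu)+J_d(\FF)$ is equivalent to $J_d(\FF)\leq(1-\mu)\log q$, and the elementary comparison $\mu_0+(1-\mu_0)\log q\leq\sqrt{q}-(\sqrt{q}-1)\log\frac{q}{q-1}$) is correct.

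The genuine gap is exactly the step you flag as non-automatic, and your treatment of it does not close it. First, Theorem~\ref{thm_bounds} bounds $J_r(q,A(q))=\liminf_{\FF}J_r(\FF)$, an infimum over \emph{all} families attaining the Ihara constant; it does not, by itself, bound $J_d(\FF)$ for the particular Garcia--Stichtenoth tower you have already fixed. You would need to argue either that the proof of Theorem~\ref{thm_bounds} in \cite{cas:cram:xing} is realized on this tower, or pass to a tower supplied by that theorem instead of fixing the GS tower up front (the paper has the same conflation, e.g.\ at the end of Example~1, but that does not repair it here). Second, the phrase ``in the relevant range of $(q,d)$'' concedes that the inequality $J_d(\FF)\leq(1-\mu_0)\log q$ fails for $d$ large relative to $q$: the upper bounds of Theorem~\ref{thm_bounds} are all of order $\log_q d$, which eventually exceeds the fixed quantity $\frac{\sqrt{q}}{\sqrt{q}+1}\log q$. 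The proposition is stated for an arbitrary $d\geq 2$, so a proof either needs to quantify the admissible range of $d$ or must exploit that the proposition's bound $\sqrt{q}+(\sqrt{q}-1)\log\frac{q-1}{q}-\mu$ is weaker than $(1-\mu_0)\log q$; you instead route through the \emph{stronger} hypothesis~(i) of Theorem~\ref{asymp_thm}, which is both unverified and unnecessarily restrictive. In short, your proof identifies the right ingredients and the right reduction but stops precisely at the step that carries the mathematical content.
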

An immediate consequence of Proposition \ref{prop_optimal} is the following corollary whose proof follows from Remark \ref{rem_optimal},  and is similar to that of Theorem  \ref{asymp_thm}:
\begin{corollary} Suppose that $q$ is a square and $d,k_i,n_i \in \mathbb{N}$ with $d\geq 2$.
	Then there exist $t_i \in \mathbb{N}$ depending on $n_i$ satisfying  $1\leq t_i<n_i$, and  an infinite family of  $\{(n_i,t_i,d,n_i-t_i)\}_{i\geq 0}$ arithmetic secret sharing schemes for $\F_q^{k_i}$ over $\F_q$ with uniformity.
\end{corollary}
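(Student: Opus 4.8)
The plan is to run the argument of Theorem~\ref{asymp_thm} on the optimal tower supplied by Proposition~\ref{prop_optimal}, using Remark~\ref{rem_optimal} to make the relevant limit explicit. Since $q$ is a square, Remark~\ref{rem_optimal} provides a tower $\mathcal{F}=\{F_i\}_{i\geq 0}$ over $\F_q$ with $\beta_1(\mathcal{F})=A(\mathcal{F})=\sqrt{q}-1$ and $\beta_m(\mathcal{F})=0$ for all $m\neq 1$, for which
\[
h(\mathcal{F})=\lim_{i\to\infty}\frac{\log h_i}{g_i}=\log q+(\sqrt{q}-1)\log\big(\tfrac{q}{q-1}\big).
\]
By Proposition~\ref{prop_optimal} this tower may moreover be chosen so that $\mu+J_d(\mathcal{F})\leq \sqrt{q}+(\sqrt{q}-1)\log\big(\tfrac{q-1}{q}\big)$ and $B_1(F_i)\geq n_i+k_i$ for all sufficiently large $i$, where $\mu$ is as in Condition~(\ref{mu_0}).

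For each large $i$, fix pairwise distinct rational places $Q_{i,1},\dots,Q_{i,k_i},P_{i,1},\dots,P_{i,n_i}\in\mathbb{P}^{(1)}(F_i)$ (possible because $B_1(F_i)\geq n_i+k_i$), and set $Q_i=\sum_j Q_{i,j}$, $D_i=Q_i+\sum_j P_{i,j}$, and $P_{i,A}=\sum_{j\in A}P_{i,j}$ for $A\subseteq\{1,\dots,n_i\}$ with $|A|=t_i$, together with a canonical divisor $K_i$. Exactly as in the proof of Theorem~\ref{asymp_thm}, choose $s_i\in\mathbb{Z}$ and $t_i\in\mathbb{N}$ with $1\leq t_i<n_i$ so that $r_{1,i}:=2g_i-s_i+t_i+k_i-2=1$ and $r_{2,i}:=ds_i-n_i+t_i=\lfloor\mu g_i\rfloor\geq 1$. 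By Theorems~\ref{thm_cas_1} and~\ref{thm_cas_2} (i.e.\ \cite[Theorem~4.11]{cas:cram:xing}), and using $A_1(F_i)=B_1(F_i)$, $|\mathcal{J}_{F_i}[-1]|=1$, it then suffices to verify
\[
h_i\ \geq\ 2\,A_{r_{2,i}}(F_i)\,|\mathcal{J}_{F_i}[d]|\ >\ B_1(F_i)+A_{r_{2,i}}(F_i)\,|\mathcal{J}_{F_i}[d]|
\]
for all sufficiently large $i$; a solution $G_i\in Div(F_i)$ of the associated Riemann--Roch system then makes $C(G_i,D_i)_L$ an $(n_i,t_i,d,n_i-t_i)$-arithmetic secret sharing scheme for $\F_q^{k_i}$ over $\F_q$ with uniformity, and letting $i$ range over the infinitely many admissible indices yields the desired infinite family.

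The core of the argument is the asymptotic verification of the displayed inequality. Taking $\log_q$, dividing by $g_i$ and passing to the limit, the right-hand inequality holds for large $i$ as soon as $h(\mathcal{F})>\Delta(\mu)+J_d(\mathcal{F})$, where $\Delta(\mu)=\lim_i \log_q A_{r_{2,i}}(F_i)/g_i$ exists by Theorem~\ref{tsfasman_thm} since $\mathcal{F}$ is asymptotically exact. Here, in contrast to Theorem~\ref{asymp_thm} where one only uses $\Delta(\mu)\geq\mu\log q$, both $h(\mathcal{F})$ (from Remark~\ref{rem_optimal}) and $\Delta(\mu)=h(\mathcal{F})-(1-\mu)\log q$ (from Theorem~\ref{tsfasman_thm}) are explicit, so the requirement reduces to $(1-\mu)\log q>J_d(\mathcal{F})$, which after substituting the computed value of $h(\mathcal{F})$ is exactly a (strict form of) condition~(i) of Proposition~\ref{prop_optimal}. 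The main obstacle is not conceptual but bookkeeping: arranging the two linear constraints $r_{1,i}=1$ and $r_{2,i}=\lfloor\mu g_i\rfloor$ together with $1\leq t_i<n_i$. This is handled by the slack coming from $g_i\to\infty$ and the bound $n_i+k_i\leq B_1(F_i)\sim(\sqrt q-1)g_i$, precisely as in the proof of Theorem~\ref{asymp_thm}.
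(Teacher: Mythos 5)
Your overall plan is the one the paper intends: take the optimal tower from Remark~\ref{rem_optimal}/Proposition~\ref{prop_optimal}, plug it into the machinery of Theorem~\ref{asymp_thm}, and use Theorem~\ref{tsfasman_thm} to make $\Delta(\mu)$ explicit so that the asymptotic inequality $h(\mathcal{F})>\Delta(\mu)+J_d(\mathcal{F})$ collapses to $(1-\mu)\log q>J_d(\mathcal{F})$. That reduction is correct, and the bookkeeping you defer (choosing $s_i,t_i$ with $r_{1,i}=1$, $r_{2,i}=\lfloor\mu g_i\rfloor\geq 1$, $1\leq t_i<n_i$) is exactly the bookkeeping of Theorem~\ref{asymp_thm}, so deferring it is reasonable.

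The step that does not check out is your last sentence: the condition $(1-\mu)\log q>J_d(\mathcal{F})$ is the (strict form of) hypothesis~(i) of Theorem~\ref{asymp_thm}, not of Proposition~\ref{prop_optimal}. Condition~(i) of Proposition~\ref{prop_optimal} reads $\mu+J_d(\mathcal{F})\leq \sqrt{q}+(\sqrt{q}-1)\log\big(\tfrac{q-1}{q}\big)$; this is not a rescaling of $(1-\mu)\log q>J_d(\mathcal{F})$ (the right-hand side involves $\sqrt{q}$, not $\log q$, and the left-hand side has $\mu$ rather than $\mu\log q$), nor does substituting the explicit value of $h(\mathcal{F})$ turn one into the other, since $h(\mathcal{F})$ cancels out in the reduction. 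So as written you have not shown that the tower provided by Proposition~\ref{prop_optimal} satisfies the inequality you actually need; you would either have to verify $J_d(\mathcal{F})<(1-\mu)\log q$ directly for the tower (e.g.\ via the upper bounds on $J_d$ in Theorem~\ref{thm_bounds} together with the optimal tower's $\mu_0=\tfrac{1}{\sqrt q+1}$), or supply an argument that Proposition~\ref{prop_optimal}(i) implies Theorem~\ref{asymp_thm}(i). To be fair, the paper itself only says the corollary ``follows from Remark~\ref{rem_optimal} and is similar to Theorem~\ref{asymp_thm}'' without reconciling these two conditions, so the imprecision is partly inherited; but in a self-contained proof you should close this loop rather than assert the equivalence.
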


\begin{example} Let $q=\ell^2$, where $\ell$ is a prime power. Consider the tower $\FF=\{F_i\}_{i\geq 0}$ over  $\F_q$ defined by the equation 
\[f(x,y)=y^\ell x^{\ell -1}+y-x^\ell \in \F_q[x,y].\]
This tower is optimal \cite{gar:stic}, i.e. $\beta_1(\FF)=\ell-1$ and  $\beta_i(\FF)=0$ for all $i\geq 2$. Thus, the value of $\mu_0$ defined in Condition (\ref{mu_0})  is
\[\mu_0=\frac{1}{\ell+1}.\]
Choose $\mu=\mu_0$, $d=2$. By\cite[Theorem 2.10]{gar:stic} we have
\begin{equation*}
	g(F_i)=\begin{cases}
		(q + 1)q^{i}-(q + 2)q^{i/2} + 1 \qquad \qquad \quad \textrm{ if $i$ is even}\\
		(q + 1)q^{i}- \frac{1}{2} (q^2 + 3q + 2)q^{({i-1})/2} + 1 \; \textrm{ if $i$ is odd}.
	\end{cases}
\end{equation*}
Moreover, by \cite[Proposition 3.1]{gar:stic}, we have 
\[B_1(F_i)\geq (q-1)\ell^i+2\ell \quad \textrm{ for all $i\geq 4$}.\] 
For each $i\geq 4$ we choose $n_i,k_i \in \mathbb{N}$ in such a way that $B_1\geq n_i+k_i$. Then Proposition \ref{prop_optimal} is satisfied by Theorem \ref{thm_bounds}. Therefore, the tower $\FF$ can be used to construct  an infinite family of  $\{(n_i,t_i,d,n_i-t_i)\}_{i\geq 0}$ arithmetic secret sharing schemes for $\F_{\ell^2}^{k_i}$ over $\F_{\ell^2}$ with uniformity.
\end{example} 
\begin{example}  Consider the example from \cite[Proposition 5.20]{bal:rol:tut}. Let $\FF=\{F_i\}_{i\geq 0}$ be the tower over $\F_9$  defined by the polynomial
\begin{equation}\label{lr.poly}
	f(X,Y)=Y^2+(X+b)^2-1 \in \F_9[X,Y],  b\in \F^*_3
\end{equation}
and $F_0=\F_9(x_0)$ be the rational function field. Let $E=F_0(z)$ with $z$ is a root of the polynomial 
\[ \varphi(T)=(T^2+\alpha^7)(T^9-T)-\frac{1}{x_0} \in F_0[T],\]
where  $\alpha$ is a primitive element for $\F_9$. Then the sequence $\mathcal{E}=\{E_i\}_{i\geq 0}$, 
with $E_i:=EF_i$, over $\F_9$ is a composite quadratic tower such that for all $i\geq 0$,
\begin{itemize}
	\item[(i)] $B_1(E_i)\geq 9\cdot 2^i$ and $B_2(E_i)\geq 2^i $,
	\item[(ii)] 
	\[g_i=\begin{cases} 21\cdot 2^{i-1}-33\cdot 2^{(i-2)/2}+6 \;\textrm{ if $i\equiv 0\bmod 2$,}\\
	21\cdot 2^{i-1}-11\cdot 2^{(i+1)/2}+6 \;\textrm{ if $i\equiv 1\bmod 2$,}
	\end{cases}\]
	where $g_i=g(E_i)$.
	\item[(iii)] $ \beta_1(\mathcal{E})=\frac{6}{7},\; \beta_2(\mathcal{E})=\frac{2}{21}$ and $\beta_j(\mathcal{E})=0$ for all  $j\geq 3$. 
\end{itemize}
Thus, the value of $\mu_0$  defined in Condition (\ref{mu_0}) is
\[\mu_0=\frac{23}{210}\approx 0.12.\]
Choose $\mu=0.5\geq \mu_0$ and for simplicity choose $k_i=2, t_i=50$ for all $i=1,2,3,4$.
From the proof of Theorem \ref{thm_bounds} we obtain  
$$r_{1,i}=1, \; r_{2,1}=2,\; r_{2,2}=7,\; r_{2,3}=23,\; r_{2,4}=54$$
 for all $d=2,3,4,5$. 
We now have the following table by using the relations given in the proof of  Theorem \ref{thm_bounds}:
$$ s_i=2g_i+t+k-3 \;\textrm{ and  }\; n_i=ds_i+t-r_{2,i} \textrm{ for } i=1,2,3,4.$$
Table 1: Some parameters of Example 2
\begin{center}
	\begin{tabular}{|l|c|c|c|c|r|}
		\hline
		$d$        &$j_d \leq$    &$n_1$    &$n_2$  & $n_3$ &$n_4$ \\
		\hline  
		2         & $0.8$        & $166$ &$201$  &$309$ &$525$ \\
		\hline                
		3          & $0.8$        &$225$  &$280$  &$450$ &$791$  \\
		\hline	
		4          &$0.8$         &$288$  &$363$  &$595$ &$1060$  \\
		\hline
		5         & $0.2$        &$343$   &$438$  &$732$  &$1321$ \\
		\hline
	\end{tabular} 
\end{center}
Notice that  for $d=2,3,4,5$ and  $q=9$ we have 
\[J_d(\mathcal{E})\leq \sqrt{q}+(\sqrt{q}-1)\log \big(\frac{q-1}{q}\big)-\mu \approx 1.9. \]

\end{example} 

\section{Conclusion}\label{conclusion}

In this work  some bounds \cite{cas:cram:xing} on  the construction of arithmetic secret sharing schemes are improved by using  bounds on class number \cite{la:mar}. We here estimated the torsion limit of an important class of towers of function fields  \cite{bas:be:gar:stic} depending on the Ihara limit. In the case $d\geq 2$, these new bounds can easily be adapted to improve several applications of torsion limits ranging from improving the communication complexity of zero knowledge protocols  for multiplicative relations  \cite{cramer:damgaard:pastro} and bilinear complexity of finite field multiplication to obtain new results on the asymptotics of frameproof codes.

\section*{Acknowledgment}

We thank Mehmet Sab{\i}r Kiraz for his comments during the preparation of this research. O. Uzunkol is partly supported by a  research project funded by Bundesministerium f\"{u}r Bildung und Forschung (BMBF), Germany (01DL12038) and T\"{U}B\.{I}TAK, Turkey (TBAG-112T011). Uzunkol's research is also partially supported by  the project (114C027) funded by EU FP7-The Marie Curie Action and  T\"UB\.{I}TAK (2236-CO-FUNDED Brain Circulation Scheme).

\bibliographystyle{splncs}

\end{document}